\numberwithin{equation}{section}
\newtheorem{proposition}{Proposition}
\newtheorem{lemma}[proposition]{Lemma}
\newtheorem{theorem}[proposition]{Theorem}
\newtheorem{corollary}[proposition]{Corollary}
\numberwithin{proposition}{section}
\theoremstyle{definition}
\newtheorem{definition}[proposition]{Definition}
\newtheorem{remark}[proposition]{Remark}
\newcommand{\rean}{\mathbb{R}}
\newcommand{\rr}[1]{\mathrm{#1}}
\newcommand{\dif}{\rr{d}}
\newcommand{\Rm}{\rr{Rm}}
\newcommand{\ric}{\rr{Ric}}
\newcommand{\vol}{\rr{Vol}}
\newcommand{\ddt}{\frac{\dif}{\dif t}}
\newcommand{\ddtt}{\frac{\dif}{\dif\tau}}
\newcommand{\pdt}{\frac{\partial}{\partial t}}
\newcommand{\leqs}{\leqslant}
\newcommand{\geqs}{\geqslant}
\newcommand{\N}{\mathcal{N}}
\newcommand{\Np}{\N^\phi}
\newcommand{\Npp}{\N^{\phi*}}
\newcommand{\Nh}{\N^H}
\newcommand{\W}{\mathcal{W}}
\newcommand{\Wp}{\W^\phi}
\newcommand{\Wh}{\W^H}
\newcommand{\hn}{\frac{n}{2}}
\newcommand{\iv}{^{-1}}
\DeclareMathOperator{\tr}{tr}
\DeclareMathOperator{\var}{Var}
\date{\today}
\author{Xilun Li}
\address{
	SMS, Peking University, Beijing 100871, China}
\email{\href{mailto:lxl28@stu.pku.edu.cn}{lxl28@stu.pku.edu.cn}}
\title{Entropy and Heat Kernel on Generalized Ricci Flow}
\begin{document}
	\maketitle
	\tableofcontents
	\begin{abstract}
		We introduce analogous geometric quantities and prove some geometric and analytic bounds in \cite{bam1} to generalized Ricci flow.
	\end{abstract}
	\section{Introduction}
	Generalized Ricci flow is a triple $(M,g_t,H_t)$, where $M$ is a smooth manifold, $g_t$, $H_t$ is a one-parameter family of metrics and closed three-forms on $M$, which satisfies the equation
	\begin{align}
		\left\{
		\begin{aligned}
			\pdt g&=-2\ric+\frac{1}{2}H^2,\\
			\pdt H&=-dd^*_gH,
		\end{aligned}
		\right.
	\end{align}
	where $H^2(X,Y):=\left<i_XH,i_YH\right>$.
	
	This flow arises in many aspects, such as generalized geometry \cite{jeffGRF}, complex geometry and mathematical physics \cite{RYM}. In complex geometry, G. Tian and J. Streets introduce the pluriclosed flow in \cite{pluriclosed} to study the geometry of non-K\"{a}hler  manifold as extensions of the K\"{a}hler Ricci flow. Let $(M^{2n},J,\omega)$ be a complex manifold, the pluriclosed flow is
	\begin{equation}
		\pdt\omega=-\rho_B^{1,1},
	\end{equation}
	where $\rho_B$ is the Ricci form of Bismut connection. In \cite{plurireg}, regarding $M$ as a real manifold, pluriclosed flow is just the generalized Ricci flow up to a smooth family of diffeomorphism induced by the Lee form $\theta=-Jd^*\omega$, where the 3-form $H$ is the torsion of Bismut connection $H=d^c\omega=i(\bar{\partial}-\partial)\omega$. Thus the closeness of $H$ is equivalent to the pluriclosed condition,
	 i.e. $\partial\bar{\partial}\omega=0$.
	
	 The main motivation of pluriclosed flow is to provide a geometric-analytic approach to classify the non-K\"{a}hler complex surfaces, especially the class \uppercase\expandafter{\romannumeral7} surfaces, as what Ricci flow does in Geometrization Conjectures and K\"{a}hler-Ricci flow does in K\"{a}hler manifolds. The pluriclosed flow preserves the pluriclosed condition. According to Gauduchon's results\cite{Gauduchon}, each complex surfaces admit a pluriclosed metric. For this reason, we should mainly concern about the flows in complex dimension 2, i.e. real dimension 4.
	
	A symmetry reduction of the generalized Ricci flow will give another interesting flow, which we called Ricci-Yang-Mills flow, see \cite{RYM}. When the manifold is the total space of a principal bundle, the phenomenon of the flow will become more clear. The Ricci-Yang-Mills flow not only is a symmetric solution of the generalized Ricci flow, but also has arisen in mathematical physics literature and as a tool to understand the geometry of principal bundles.
	
	The classical Ricci flow is just the special case where $H\equiv0$. In general, since $H^2(X,X)=|i_XH|^2\geqs0$, the generalized Ricci flow is actually a kind of super Ricci flow, i.e. 
	\begin{equation}
		\pdt g\geqs -2\ric.
	\end{equation}
	
	R. H. Bamler developed a compactness and regularity theory of super Ricci flow in \cite{bam1,bam2,bam3}, which states that any sequence of pointed super Ricci flows of bounded dimensions subsequentially converges to metric flow in a certain sense, which he called the $\mathbb{F}$-limit. Moreover, the noncollapsed $\mathbb{F}$-limits of Ricci flows is smooth away a set of codimension at least 4 in parabolic sense. Since the super Ricci flow is a quite big class of flows, it seems unlikely to get structure theory for general super Ricci flow. However, for some particular cases, such as generalized Ricci flow, it's hopeful to have similar structure results as Ricci flow. A key step in the study of pluriclosed flow is the existence conjecture, see \cite[Conjecture 5.2]{plurireg}. The structure theory, if exists, will be useful to deal with the conjecture.
	
	In this paper, we will prove some theorems on generalized Ricci flow, which are analogous to the results on Ricci flow in \cite{bam1}. The paper will be organized as follows. 
	
	In Section 2, we introduce the generalized definition of the pointed Nash and Perelman entropy, which has monotonicity along generalized Ricci flow. Then we will show that the lower bound of generalized point Nash entropy implies the weighted noncollapsed condition. 
	
	In Section 3, we will show the upper bounds of the heat kernel and its gradient. 
	
	In Section 4, we show an $\varepsilon$-regularity theorem, which roughly states that the generalized Nash entropy is close to that in Euclidean implies the geometry is close to Euclidean.
	
	In Section 5, we show some estimates on $H$, particularly in lower dimensional cases, i.e. $n\leqs4$.
	
	\textbf{Acknowledgements.}
	I am grateful to my advisor Professor Gang Tian for his helpful guidance. I thank Yanan Ye and Shengxuan Zhou for inspiring discussions.
	\section{Generalized Entropy}
	\subsection{Generalized Nash Entropy and Perelman Entropy}
	
	J. Streets\cite{jeff} introduced several concepts in generalized Ricci flow, we list them here for convenience.
	\begin{definition}[\text{\cite[Definition 2.2]{jeff}}]
		Given $(M,g_t,H_t)$ a solution to generalized Ricci flow, a one-parameter family $\phi_t$ is called the dilaton flow if
		\begin{equation}
			\square\phi=\frac{1}{6}|H|^2,
		\end{equation} 
		where $\square:=\partial_t-\Delta_{g(t)}$ is the heat operator coupled with the flow.
	\end{definition}
	%\begin{lemma}[Parabolic scaling]
	%Suppose $(g(t),H(t),\phi(t))$ solves the generalized Ricci flow, then for any $\lambda>0$, $\tilde{g}(t):=\lambda g(t_0+\lambda\iv t)$, $\tilde{H}(t):=\lambda H(t_0+\lambda\iv t), \tilde{\phi}(t):=\phi(t_0+\lambda\iv t)$, then $(\tilde{g},\tilde{H},\tilde{\phi})$ also solves the generalized Ricci flow, moreover $|\tilde{H}|^2_{\tilde{g}}=\lambda\iv|H|^2_g$.
	%\end{lemma}
	\begin{definition}[\text{\cite[Definition 2.1]{jeff}}]
		Given a smooth manifold $M$, a triple $(g,H,f)$ of a Riemannian metric, closed three-form and function, \emph{generalized scalar curvature} is
		\begin{equation}
			R^{H,f}:=R-\frac{1}{12}|H|^2+2\Delta f-|\nabla f|^2,
		\end{equation}
		\emph{generalized Ricci curvature}, or \emph{twisted Bakry-Emery curvature} is
		\begin{equation}
			\ric^{H,f}:=\ric-\frac{1}{4}H^2+\nabla^2f-\frac{1}{2}(d^*_gH+i_{\nabla f}H).
		\end{equation}
	\end{definition}
	\begin{proposition}[\text{\cite[Proposition 2.4]{jeff}}]
		Let $(M,g,H)$ be the generalized Ricci flow, $\phi_t$ dilaton flow,
		\begin{equation}\label{evolveR}
			\square R^{H,\phi}=2|\ric^{H,\phi}|^2.
		\end{equation}
	\end{proposition}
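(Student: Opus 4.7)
The plan is to apply the heat operator $\square = \partial_t - \Delta_{g(t)}$ to each of the four summands of $R^{H,\phi} = R - \tfrac{1}{12}|H|^2 + 2\Delta\phi - |\nabla\phi|^2$ and reorganize the result as $2|\ric^{H,\phi}|^2$. Since $\ric^{H,\phi}$ splits into a symmetric piece $\ric - \tfrac14 H^2 + \nabla^2\phi$ and an antisymmetric piece $-\tfrac12(d^*_g H + i_{\nabla\phi}H)$, the target decomposes orthogonally as $2|\ric^{H,\phi}|^2 = 2|\ric - \tfrac14 H^2 + \nabla^2\phi|^2 + \tfrac12|d^*_g H + i_{\nabla\phi}H|^2$, which tells us exactly which cross terms must be matched.

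For $\square R$, I would use the general variational formula for scalar curvature under $\partial_t g = h$, applied to $h = -2\ric + \tfrac12 H^2$; the $-2\ric$ part reproduces the classical Ricci-flow identity $\square R = 2|\ric|^2$, while the $\tfrac12 H^2$ perturbation contributes extra terms of the form $\Delta|H|^2$, $\mathrm{div}\,\mathrm{div}(H^2)$, and $\langle\ric, H^2\rangle$. For $\square|H|^2$, I would use $\partial_t H = -dd^*_g H = -\Delta_d H$ (valid since $dH=0$), convert $\Delta_d$ to the rough Laplacian via the Weitzenböck formula for $3$-forms, and combine with Bochner $\Delta|H|^2 = 2\langle\Delta H, H\rangle + 2|\nabla H|^2$ plus the inverse-metric variation to obtain an expression in $|\nabla H|^2$, curvature couplings $\langle\Rm\ast H, H\rangle$, and quartic-in-$H$ terms. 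For the dilaton terms, the equation $\square\phi = \tfrac16|H|^2$ lets me compute $\square|\nabla\phi|^2$ via Bochner (producing $-2|\nabla^2\phi|^2$ together with $-\tfrac12 H^2(\nabla\phi,\nabla\phi)$ from $\partial_t g$ and $\tfrac13\langle\nabla\phi,\nabla|H|^2\rangle$ from the gradient of the dilaton equation), and $\square\Delta\phi$ via the commutator $[\square,\Delta]$, which under this $\partial_t g$ produces the cross terms $2\langle\ric,\nabla^2\phi\rangle$ and $-\tfrac12\langle H^2,\nabla^2\phi\rangle$, plus $\tfrac16\Delta|H|^2$ after applying $\Delta$ to the dilaton equation.

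Summing the four contributions, the $\Delta|H|^2$ and $\mathrm{div}\,\mathrm{div}(H^2)$ pieces should cancel after using the identity $(d^*_g H)_{ij} = -\nabla^k H_{kij}$ together with $dH=0$, and the remaining terms should collapse into the orthogonal decomposition above. I expect the main obstacle to be precisely this combinatorial bookkeeping: matching the cross terms $\langle\ric,\nabla^2\phi\rangle$, $\langle\ric, H^2\rangle$, $\langle H^2,\nabla^2\phi\rangle$, and $\langle d^*_g H, i_{\nabla\phi}H\rangle$, together with the quartic-in-$H$ pieces and the Weitzenböck curvature term, and in particular verifying that the antisymmetric square $\tfrac12|d^*_g H + i_{\nabla\phi}H|^2$ emerges correctly after repeated use of the first Bianchi identity and integration-by-parts identities for closed $3$-forms.
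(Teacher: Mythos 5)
The paper does not actually prove this proposition; it is stated verbatim with a citation to Streets (\cite[Proposition~2.4]{jeff}), so there is no ``paper's own proof'' to compare against. Your proposal is a plan for the same brute-force computation that Streets carries out, and the overall architecture is right: split $R^{H,\phi}$ into its four summands, compute $\square$ of each using the evolution $\partial_t g = -2\ric+\tfrac12 H^2$, $\partial_t H=-dd^*_gH$, $\square\phi=\tfrac16|H|^2$, and use the orthogonal decomposition of $\ric^{H,\phi}$ into its symmetric piece $\ric-\tfrac14H^2+\nabla^2\phi$ and antisymmetric piece $-\tfrac12(d^*_gH+i_{\nabla\phi}H)$ to see which cross terms you are aiming for. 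That decomposition observation --- giving $2|\ric^{H,\phi}|^2 = 2|\ric-\tfrac14H^2+\nabla^2\phi|^2+\tfrac12|d^*_gH+i_{\nabla\phi}H|^2$ --- is correct and is the right organizing principle.

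Two small clarifications on the mechanism, so that you are not led astray when executing the bookkeeping. First, the $\tfrac12|d^*_gH|^2$ on the right does not emerge from ``cancellation'' between $\Delta|H|^2$ and $\mathrm{div}\,\mathrm{div}(H^2)$; rather, the variational formula for $R$ under $\partial_t g = h$ contributes $\tfrac12\,\mathrm{div}\,\mathrm{div}(H^2)$, and expanding $\mathrm{div}\,\mathrm{div}(H^2)=\nabla^i\nabla^j(H_{ikl}H_j{}^{kl})$ produces the term $\nabla^iH_{ikl}\,\nabla^jH_j{}^{kl}=|d^*_gH|^2$ directly (the remaining pieces, including the mixed $\nabla^jH_{ikl}\nabla^iH_j{}^{kl}$ term, are then handled via $dH=0$, which lets you trade antisymmetrized derivatives of $H$). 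Second, when you compute $\square|H|^2$, be careful with the sign convention for the Hodge Laplacian: the paper (and Streets) take $\Delta_d=-(dd^*+d^*d)$, so $\partial_tH=-dd^*_gH=+\Delta_dH$, and the Weitzenb\"ock conversion then yields the $-2|\nabla H|^2-6R_H-\tfrac32|H^2|^2$ structure as in Proposition~5.4 of this paper; if you use the opposite sign for $\Delta_d$ you will get the wrong sign on the curvature and gradient terms. With those points in hand, the plan is sound and, carried through carefully, reproduces the cited identity.
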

	\begin{remark}
		Note that the generalized scalar curvature is not equal to the trace of generalized Ricci curvature, so it may not be bounded from below by a constant only depends on time and dimension as Ricci flow.
	\end{remark}
	\begin{corollary}
		Let $(M,g,H)$ be the generalized Ricci flow, $\phi_t$ dilaton flow, 
		\begin{equation}
			\frac{1}{12}|H|^2+|\nabla\phi|^2\leqs R+2\Delta\phi-R^\phi_0,
		\end{equation}
		where $R^\phi_0:=\min_xR^{H,\phi}(x,0)$. Integrate the inequality, we have
		\begin{equation}\label{HL2phi}
			\int_M\left(\frac{1}{12}|H|^2+|\nabla\phi|^2\right)\dif g_t\leqs \int_M\left(R-R^\phi_0\right)\dif g_t.
		\end{equation}
	\end{corollary}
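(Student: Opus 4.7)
The plan is to derive the pointwise bound from the evolution equation \eqref{evolveR} via a maximum principle and then integrate.

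First, I would observe that since $|\ric^{H,\phi}|^2 \geqs 0$, the evolution equation \eqref{evolveR} gives
\begin{equation*}
\square R^{H,\phi} = \partial_t R^{H,\phi} - \Delta R^{H,\phi} \geqs 0.
\end{equation*}
Applying the parabolic maximum principle (assuming $M$ is closed, or otherwise imposing suitable decay/completeness so the weak maximum principle applies), the spatial minimum $\min_x R^{H,\phi}(x,t)$ is nondecreasing in $t$. Hence
\begin{equation*}
R^{H,\phi}(x,t) \geqs R^\phi_0 := \min_x R^{H,\phi}(x,0) \quad \text{for all } (x,t).
\end{equation*}

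Next I would substitute the definition $R^{H,\phi} = R - \tfrac{1}{12}|H|^2 + 2\Delta\phi - |\nabla\phi|^2$ into the above inequality and rearrange the nonpositive terms $-\tfrac{1}{12}|H|^2 - |\nabla\phi|^2$ to the left-hand side, which yields the pointwise bound
\begin{equation*}
\frac{1}{12}|H|^2 + |\nabla\phi|^2 \leqs R + 2\Delta\phi - R^\phi_0.
\end{equation*}

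Finally, for the integrated form \eqref{HL2phi}, I would integrate both sides against $\dif g_t$ and use the divergence theorem, since on a closed manifold $\int_M \Delta\phi\, \dif g_t = 0$. The only obstacle here is ensuring the maximum principle and integration by parts are justified in the noncompact setting; in the closed case both are immediate, and in the noncompact case one would need to assume enough decay of $\phi$, $|\nabla\phi|$, and $R^{H,\phi}$ (for instance, the curvature bounds and growth hypotheses typically imposed in the study of such flows) to legitimize both steps.
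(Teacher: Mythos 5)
Your proof is correct and takes the same route the paper intends: apply the maximum principle to $\square R^{H,\phi} = 2|\ric^{H,\phi}|^2 \geqs 0$ to conclude $R^{H,\phi}(\cdot,t) \geqs R^\phi_0$, unpack the definition of $R^{H,\phi}$ to obtain the pointwise inequality, and integrate using $\int_M \Delta\phi\,\dif g_t = 0$. The paper's proof is stated in one line, and you have simply supplied the routine details.
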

	\begin{proof}
		This follows from maximum principle applied to (\ref{evolveR}).
	\end{proof}
	\begin{definition}[\text{\cite[Definition 3.1]{jeff}}]
		Let $(M,g_t,H_t,\phi_t)$ be a solution to generalized Ricci flow. Define the \emph{conjuate heat operator}
		\begin{equation*}
			\square^*:=-\partial_t-\Delta_{g(t)}+R-\frac14|H|^2.
		\end{equation*}
		Also define the \emph{weighted conjugate heat operator}
		\begin{equation*}
			\square_\phi^*u:=e^\phi \square^*\left(u e^{-\phi}\right)=-\partial_tu-\Delta_{g(t)}u+2\left<\nabla\phi,\nabla u\right>+R^{H,\phi}u.
		\end{equation*}
		These are conjugate operators in the following sense:
		\begin{equation*}
			\begin{aligned}
				\int_{t_1}^{t_2}\int_M (\square u)v\dif g_t\dif t&=\left.\int_M uv\dif g_t\right|_{t_1}^{t_2}+\int_{t_1}^{t_2}\int_M u(\square^* v)\dif g_t\dif t,\\
				\int_{t_1}^{t_2}\int_M (\square u)ve^{-\phi}\dif g_t\dif t&=\left.\int_M uve^{-\phi}\dif g_t\right|_{t_1}^{t_2}+\int_{t_1}^{t_2}\int_M u(\square^* v)e^{-\phi}\dif g_t\dif t.
			\end{aligned}
		\end{equation*}
	\end{definition}
	\begin{definition}[\text{\cite[Definition 4.1]{jeff}}]
		Assume $v=(4\pi\tau)^{-n/2}e^{-f}e^{-\phi}$, $\dif\nu=v\dif g$, $\int_M\dif\nu=1$, define the \emph{weighted Nash entropy} by
		$$\Np[g,f,\tau]:=\int f\dif\nu-\frac{n}{2}.$$
		And define the \emph{weighted Perelman entropy} by
		$$\W^\phi[g,f,\tau]:=\int(\tau R^{f+\phi}+f-n)\dif\nu=\int[\tau(R^{\phi}+|\nabla f|^2)+f-n]\dif\nu.$$
	\end{definition}
	\begin{proposition}
		Suppose $\square^*v=0$, $\ddt\tau=-1$, $(M,g,H,\phi)$ is generalized Ricci flow, then
		$$\frac{d}{d\tau}(\tau \N^\phi)=\W^\phi,\ \ \ \frac{d}{d\tau}\W^\phi=-2\tau\int|\ric^{f+\phi}-\frac{1}{2\tau}g|^2\dif\nu+\frac{1}{3}\int|H|^2\dif\nu.$$
	\end{proposition}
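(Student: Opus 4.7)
The plan is to first derive the PDE satisfied by $f$ from the conjugate heat equation, then compute each of the two derivatives by differentiating under the integral sign and integrating by parts against $d\nu$, invoking \eqref{evolveR} together with the Bochner-type evolution of $|\nabla f|^2$.

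First I would expand $\square^{*}v = 0$ with $v = (4\pi\tau)^{-n/2} e^{-f-\phi}$ and $d\tau/dt = -1$. A direct substitution of $\pdts v$, $\Delta v$, and $\langle\nabla\phi,\nabla v\rangle$ in terms of $f$ gives the conjugate heat equation
\[
\pdts f = -\Delta f + |\nabla f|^{2} - R^{H,\phi} + \tfrac{n}{2\tau} + 2\langle\nabla\phi,\nabla f\rangle.
\]
For the first identity, I would use the conjugate relation $\ddt\int f\,d\nu = \int (\square f)\,d\nu$ (valid since $\square^{*}v=0$), together with the integration-by-parts rule $\int \Delta u\,d\nu = \int \langle \nabla u,\nabla f + \nabla\phi\rangle\,d\nu$ coming from $\nabla v = -v(\nabla f+\nabla\phi)$. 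This gives $\ddtt\int f\,d\nu = \int(|\nabla f|^{2} + R^{H,\phi})\,d\nu - \tfrac{n}{2\tau}$. Multiplying by $\tau$ and adding $\Np$ reproduces exactly the second form of $\Wp$ in the definition, so $\ddtt(\tau\Np) = \Wp$.

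For the Perelman monotonicity, I would set $P:=\tau(|\nabla f|^{2}+R^{H,\phi}) + f - n$, so $\Wp = \int P\,d\nu$, and write
\[
\ddtt \Wp = -\int (\square P)\,d\nu
\]
thanks once more to $\square^{*}v=0$. The ingredients for $\square P$ are: (i) equation \eqref{evolveR} giving $\square R^{H,\phi} = 2|\ric^{H,\phi}|^{2}$; (ii) the Bochner-type evolution
\[
\square|\nabla f|^{2} = -2|\nabla^{2}f|^{2} + 2\langle\nabla f,\nabla\square f\rangle - \tfrac{1}{2}H^{2}(\nabla f,\nabla f),
\]
where the last term comes from the $\tfrac12H^2$ contribution to $\pdt g$; and (iii) the $f$-equation to eliminate $\square f$. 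After substituting and integrating by parts the $\nabla^{2}(f+\phi)$-terms against $d\nu$, which couples $\ric$, $H^{2}$, $\nabla(f+\phi)$, and $\tfrac{1}{2\tau}g$, the symmetric contributions should assemble into $-2\tau|\ric^{f+\phi}-\tfrac{1}{2\tau}g|^{2}$.

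The main obstacle is the delicate bookkeeping of $H$-dependent terms. The antisymmetric part $-\tfrac12(d^{*}_{g}H + i_{\nabla(f+\phi)}H)$ of $\ric^{f+\phi}$ has to be matched against the corresponding antisymmetric part inside $|\ric^{H,\phi}|^{2}$ and against the $H^{2}(\nabla f,\nabla f)$ correction from the evolution of $|\nabla f|^{2}$, while the symmetric $-\tfrac14 H^2$ term in $\ric^{f+\phi}$ must be reconciled with the $-\tfrac{1}{12}|H|^2$ appearing in $R^{H,\phi}$. The leftover $\tfrac{1}{3}\int|H|^{2}d\nu$ should emerge from combining these algebraic remainders with the dilaton identity $\square\phi = \tfrac{1}{6}|H|^{2}$, which feeds in the required amount of $|H|^2$ once one differentiates the $2\Delta\phi - |\nabla\phi|^{2}$ part of $R^{H,\phi}$ in time. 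Confirming that every coefficient lands on $\tfrac13$ under the paper's normalization of $|H|^2$ is the trickiest step.
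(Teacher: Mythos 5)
Your approach is essentially the same as the paper's: derive the evolution equation for $f$ from $\square^*v=0$, integrate by parts against $\dif\nu$ to get $\ddtt(\tau\Np)=\Wp$, and then run the Perelman-style pointwise calculation for $\ddtt\Wp$. Note that the paper itself does not carry out that last pointwise computation — it cites \cite[Proposition 3.12]{jeff} — and your sketch assembles exactly the same ingredients ((\ref{evolveR}), the drift-Bochner identity with the $\tfrac12 H^2$ correction to $\partial_t g$, the dilaton equation) that Streets uses there, so the only thing separating your write-up from a complete proof is the coefficient tracking you yourself flag as unfinished; alternatively you could conjugate to $u=e^\phi v$ as the paper does and compute $\square^*_\phi(Wu)$ rather than $\square P$, which is the dual form of the same calculation.
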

	\begin{proof}Since $f=-\log v-\phi-\hn\log(4\pi\tau)$, $\square^*v=0$, then
		$$\square f=2v^{-1}\Delta v-|\nabla(f+\phi)|^2-R+\frac{1}{12}|H|^2+\frac{n}{2\tau},$$
		$$\ddt\N=\int\square f\dif\nu=-\int\left(|\nabla(f+\phi)|^2+R-\frac{1}{12}|H|^2-\frac{n}{2\tau}\right)\dif\nu.$$
		Note that integration by part, we have $\int|\nabla(f+\phi)|^2\dif\nu=\int\Delta(f+\phi)\dif\nu$, then
		$$\frac{d}{d\tau}(\tau \N)=\N-\tau\ddt\N=\int\left[\tau\left(R-\frac{1}{12}|H|^2+|\nabla(f+\phi)|^2\right)+f-n\right]\dif\nu=\W.$$
		Let $u:=e^{\phi}v$, then $\square^*_\phi u=e^\phi\square^*(ue^{-\phi})=0$. Denote by $W^{H,F}:=\tau R^{H,F}+F-n$, then by the local computation in \cite[Proposition 3.12]{jeff},
		$$
		\begin{aligned}
			\frac{d}{d\tau}\W&=\frac{d}{d\tau}\int(W-\phi)ue^{-\phi}\dif g=\int\left(\square^*_\phi(Wu)+\square\phi u-\phi\square^*_\phi u\right)e^{-\phi}\dif g\\
			&=-2\tau\int|\ric^{f+\phi}-\frac{1}{2\tau}g|^2\dif\nu+\frac{1}{3}\int|H|^2\dif\nu.
		\end{aligned}$$
	\end{proof}
	\begin{proposition}[\text{\cite[Theorem 2.1]{existheatkernel}}]
		There exists the unique heat kernel $K(x,t;y,s)\in C^\infty(\{x,y\in M, t>s\};\rean_+)$ satisfies
		\begin{align*}
			\left\{\begin{aligned}
				&\square_{x,t}K(x,t;y,s)=0,\\
				&\lim_{t\searrow s}K(x,t;y,s)=\delta_y(x).
			\end{aligned}
			\right.
		\end{align*}
		Moreover, the conjugate heat kernel is exactly the same function:
		\begin{align*}
			\left\{\begin{aligned}
				&\square^*_{y,s}K(x,t;y,s)=0,\\
				&\lim_{s\nearrow t}K(x,t;y,s)=\delta_x(y).
			\end{aligned}
			\right.
		\end{align*}
	\end{proposition}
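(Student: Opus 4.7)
The plan is to follow the classical Levi parametrix construction, adapted to the time-dependent metric setting as for the Ricci flow heat kernel. First, I would build a local parametrix for $\square=\partial_t-\Delta_{g(t)}$ on small spacetime cylinders using the Euclidean heat kernel as a model; this is legitimate because $\square$ is uniformly parabolic on compact subsets of $M\times[s,t]$, with $g(t)$ varying smoothly in $t$. Iteratively correcting the parametrix through Duhamel's formula yields a global smooth $K(x,t;y,s)$ satisfying $\square_{x,t}K=0$ and $K(\cdot,t;y,s)\to\delta_y$ as $t\searrow s$. Uniqueness of such a forward fundamental solution follows from the parabolic maximum principle.

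Second, applying the same parametrix method to the conjugate operator $\square^{*}=-\partial_t-\Delta_{g(t)}+R-\tfrac14|H|^2$, I would construct a backward fundamental solution $\tilde K(y,s;x,t)$ with $\square^{*}_{y,s}\tilde K=0$ and $\tilde K(\cdot,s;x,t)\to\delta_x$ as $s\nearrow t$.

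Third, to identify the two, I would use the standard duality trick. For $\square u=0$ and $\square^{*}v=0$ on $[s,t]$, the volume evolution $\partial_r\dif g_r=(-R+\tfrac14|H|^2)\dif g_r$ forces $\int_M uv\dif g_r$ to be constant in $r$. Applying this to
\begin{equation*}
u(z,r):=\int_M K(z,r;y,s) f(y)\dif g_s(y),\qquad v(z,r):=\int_M \tilde K(z,r;x,t) g(x)\dif g_t(x),
\end{equation*}
with compactly supported test functions $f,g$, and evaluating at $r=s$ and $r=t$ using the delta initial conditions, yields
\begin{equation*}
\iint_{M\times M} K(x,t;y,s) f(y)g(x)\dif g_s(y)\dif g_t(x)=\iint_{M\times M}\tilde K(y,s;x,t) f(y)g(x)\dif g_s(y)\dif g_t(x).
\end{equation*}
By the arbitrariness of $f,g$, this forces $K(x,t;y,s)=\tilde K(y,s;x,t)$, so $K$ itself solves the conjugate equation in $(y,s)$ with the prescribed delta initial condition.

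The main obstacle is the noncompact case: both the integration by parts behind the duality and the maximum principle behind uniqueness require genuine spatial decay of $K$. This is handled by establishing a priori Gaussian-type upper bounds on $K$ before running the duality argument, which can be extracted from the lower bound on the generalized Ricci curvature intrinsic to the super Ricci flow structure.
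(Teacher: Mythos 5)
The paper does not prove this proposition: it is stated as a citation of an external result, so there is no in-paper argument for you to be compared against. That said, your parametrix-plus-Duhamel construction is indeed the standard route to such a theorem, and your duality computation correctly identifies the forward kernel of $\square$ with the backward kernel of $\square^*$: the volume evolution $\partial_r\dif g_r = \bigl(-R+\tfrac14|H|^2\bigr)\dif g_r$ is exactly what makes $\int_M uv\,\dif g_r$ conserved when $\square u=0$ and $\square^*v=0$, and evaluating at the two endpoints gives the symmetry $K(x,t;y,s)=\tilde K(y,s;x,t)$.

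The one place where your sketch overreaches is the final paragraph on the noncompact case. The super Ricci flow inequality $\partial_t g\geqs -2\ric$ controls the \emph{rate of metric shrinkage} against Ricci; it is not a lower bound on $\ric$ itself, and it does not by itself yield the Gaussian-type decay needed to justify integration by parts or uniqueness on a complete noncompact manifold. Moreover, the sharp Gaussian-type bounds available in this setting (e.g.\ the heat kernel upper bound proved later in this very paper, Theorem 3.1, following Bamler) are stated in terms of the pointed Nash entropy, whose definition already presupposes the existence and basic regularity of the conjugate heat kernel measure $\dif\nu_{x,t;s}=K(x,t;y,s)\dif g_s$. Invoking them at the construction stage would therefore be circular. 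The clean way to close the gap is to either assume $M$ compact — which is what the cited reference effectively does and which makes the maximum principle and the integration by parts immediate — or to impose bounded geometry on the flow and derive crude Gaussian bounds directly from the parametrix before any entropy machinery is available.
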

	As in \cite[Definition 5.1]{bam1}, we can define the pointed entropy, i.e. taking the conjugate heat kernel as the function $v$ :
	\begin{definition}
		Given $(x,t)$, $\dif\nu_{x,t;s}(y):=K(x,t;y,s)\dif g_s(y)=(4\pi\tau)^{-\hn}e^{-(f+\phi)}\dif g_s$, the \emph{pointed Nash entropy} is defined by
		$$\N^\phi_{x,t}(\tau):=\Np[g_{t-\tau},f_{t-\tau},\tau],\ \ \Npp_s(x,t):=\Np_{x,t}(t-s),$$
		the \emph{pointed Perelman entropy} is defined by
		$$\Wp_{x,t}(\tau):=\Wp[g_{t-\tau},f_{t-\tau},\tau].$$
	\end{definition}
	The pointed entropy is not monotone due to the existence of the torsion $H$, we need to modify the definition of entropy to get the monotonicity. 
	\begin{definition}
		Define $$\Psi_{x,t}(\tau):=\int_{t-\tau}^t\int_M|H|^2\dif\nu_{x,t;s}\dif s,\ \ P_{x,t}(\tau):=\int_0^\tau s\iv\Psi_{x,t}(s)\dif s.$$
		Then we define the \emph{generalized pointed Nash entropy} by
		$$\Nh_{x,t}(\tau):=\Np_{x,t}(\tau)-\frac{1}{3}P_{x,t}(\tau).$$
		And define the \emph{generalized pointed Perelman entropy} by
		$$\Wh_{x,t}(\tau):=\Wp_{x,t}(\tau)-\frac{1}{3}\left(\Psi_{x,t}(\tau)+P_{x,t}(\tau)\right).$$
	\end{definition}
	\begin{remark}
		Note that $\Psi_{x,t}(0)=0$, $\Psi'_{x,t}(0)=|H|^2(x,t)$, so the function $P_{x,t}$ is well-defined.
	\end{remark}
	\begin{proposition}For the generalized pointed entropy, we have
		$$\ddtt(\tau\Nh_{x,t}(\tau))=\Wh_{x,t}(\tau),\ \ \frac{d}{d\tau}\Wh_{x,t}(\tau)=-2\tau\int_M|\ric^{f+\phi}-\frac{1}{2\tau}g|^2\dif\nu-\frac{1}{3\tau}\Psi_{x,t}(\tau)\leqs0.$$
	\end{proposition}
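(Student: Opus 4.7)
The plan is to reduce both identities to the evolution formulas for the unmodified entropies proved in the previous proposition, treating the correction terms $\Psi_{x,t}$ and $P_{x,t}$ as error terms that are engineered precisely to cancel the non-monotone $\frac{1}{3}\int|H|^2\dif\nu$ contribution.

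First I would establish two auxiliary differentiation formulas directly from the definitions. By the fundamental theorem of calculus applied to the outer integral in $\Psi_{x,t}(\tau)=\int_{t-\tau}^t\int_M|H|^2\dif\nu_{x,t;s}\dif s$, one gets
\[
\Psi'_{x,t}(\tau)=\int_M|H|^2\,\dif\nu_{x,t;t-\tau},
\]
which is exactly the quantity $\int_M|H|^2\dif\nu$ appearing (at the relevant slice) in the prior proposition's formula for $\tfrac{d}{d\tau}\Wp$. Likewise the definition of $P_{x,t}$ gives
\[
P'_{x,t}(\tau)=\tau^{-1}\Psi_{x,t}(\tau).
\]

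For the first identity I would expand $\ddtt(\tau\Nh_{x,t})=\ddtt(\tau\Np_{x,t})-\tfrac{1}{3}\ddtt(\tau P_{x,t})$. The first term equals $\Wp_{x,t}(\tau)$ by the prior proposition applied to the conjugate heat kernel measure. For the second, $\ddtt(\tau P)=P+\tau P'=P+\Psi$ by the formula for $P'$ above. Substituting gives $\Wp_{x,t}(\tau)-\tfrac{1}{3}(\Psi_{x,t}(\tau)+P_{x,t}(\tau))=\Wh_{x,t}(\tau)$, as desired.

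For the second identity I would differentiate $\Wh_{x,t}(\tau)=\Wp_{x,t}(\tau)-\tfrac{1}{3}(\Psi_{x,t}(\tau)+P_{x,t}(\tau))$. The derivative of $\Wp_{x,t}$ is $-2\tau\int|\ric^{f+\phi}-\tfrac{1}{2\tau}g|^2\dif\nu+\tfrac{1}{3}\int|H|^2\dif\nu$ from the prior proposition. The $\tfrac{1}{3}\int|H|^2\dif\nu$ term is precisely cancelled by $-\tfrac{1}{3}\Psi'_{x,t}(\tau)$, and what remains from differentiating $P$ is $-\tfrac{1}{3}P'_{x,t}(\tau)=-\tfrac{1}{3\tau}\Psi_{x,t}(\tau)$, yielding the claimed expression. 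Nonpositivity is then immediate since $|\ric^{f+\phi}-\tfrac{1}{2\tau}g|^2\geqs0$ and $\Psi_{x,t}(\tau)\geqs0$.

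There is essentially no analytic obstacle here: all the work was in the prior proposition (which used the Streets local computation), and the present statement is a bookkeeping argument justifying that the modifications $\Psi$ and $P$ are the right correction terms. The only mild point to check is the well-definedness of $P$ near $\tau=0$, which is handled by the remark preceding the proposition ($\Psi(0)=0$, $\Psi'(0)=|H|^2(x,t)$), so $s^{-1}\Psi(s)$ is bounded near $s=0$ and the integral defining $P$ converges.
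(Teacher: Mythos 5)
Your proposal is correct and follows essentially the same route as the paper: expand the derivatives of the modified entropies, use the prior proposition's formulas for $\Np$ and $\Wp$, and observe that $\Psi'_{x,t}(\tau)=\int_M|H|^2\dif\nu$ and $P'_{x,t}(\tau)=\tau^{-1}\Psi_{x,t}(\tau)$ produce exactly the cancellations needed. The remark on well-definedness of $P$ near $\tau=0$ is a nice touch but is already covered in the paper's preceding remark.
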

	\begin{proof}
		\begin{equation*}
			\ddtt(\tau\Nh_{x,t}(\tau))=\ddtt(\tau\Np_{x,t}(\tau))-\frac13\ddtt(\tau P_{x,t}(\tau))=\Wp_{x,t}(\tau)-\frac13\left(P_{x,t}(\tau)+\Psi_{x,t}(\tau)\right).
		\end{equation*}
		\begin{equation*}
			\begin{aligned}
				\ddtt\Wh_{x,t}(\tau)&=\ddtt\Wp_{x,t}(\tau)-\frac13\ddtt\left(P_{x,t}(\tau)+\Psi_{x,t}(\tau)\right)\\
				&=-2\tau\int_M|\ric^{f+\phi}-\frac{1}{2\tau}g|^2\dif\nu_{x,t;t-\tau}+\frac{1}{3}\int_M|H|^2\dif\nu-\frac13\left(\int_M|H|^2\dif\nu+\frac{1}{\tau}\Psi_{x,t}(\tau)\right)\\
				&=-2\tau\int_M|\ric^{f+\phi}-\frac{1}{2\tau}g|^2\dif\nu_{x,t;t-\tau}-\frac{1}{3\tau}\Psi_{x,t}(\tau)\leqs0.
			\end{aligned}
		\end{equation*}
	\end{proof}
	\begin{remark}
		Perelman entropy in Ricci flow becomes constant implies $g$ is a gradient shrinking soliton. Our generalized pointed Perelman entropy becomes constant implies the flow is actually Ricci flow and $g$ is also gradient shrinking soliton. It's natural since by \cite[Proposition 4.28]{jeffGRF}, the generalized shrinking soliton is just the shrinking soliton with $H\equiv0$.
	\end{remark}
	\begin{corollary}We have
		$$\Nh_{x,t}(\tau)=\frac{1}{\tau}\int^\tau_0\Wh_{x,t}(s)\dif s\geqs \Wh_{x,t}(\tau),\ \ \ddtt\Nh_{x,t}(\tau)\leqs0 ,$$
		$$\int_M\tau(R^{\phi}+|\nabla f|^2)\dif\nu_{t-\tau}\leqs\frac{n}{2}+\frac{1}{3}\Psi_{x,t}(\tau).$$
	\end{corollary}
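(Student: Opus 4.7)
The plan is to deduce all three inequalities directly from the preceding proposition, which gives the first-order identity $\ddtt(\tau\Nh_{x,t}(\tau))=\Wh_{x,t}(\tau)$ together with the monotonicity $\ddtt\Wh_{x,t}(\tau)\leqs 0$.

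First I would integrate the identity $\ddtt(\tau\Nh_{x,t}(\tau))=\Wh_{x,t}(\tau)$ from $0$ to $\tau$. Provided that $\tau\Nh_{x,t}(\tau)\to 0$ as $\tau\searrow 0$, this yields $\tau\Nh_{x,t}(\tau)=\int_0^\tau\Wh_{x,t}(s)\,\dif s$, which is the first equality. Because $\Wh_{x,t}$ is nonincreasing in $\tau$, its average on $[0,\tau]$ dominates its endpoint value, giving $\Nh_{x,t}(\tau)\geqs\Wh_{x,t}(\tau)$. For the derivative bound, expand $\ddtt(\tau\Nh_{x,t})=\Nh_{x,t}+\tau\,\ddtt\Nh_{x,t}$, so
\begin{equation*}
\ddtt\Nh_{x,t}(\tau)=\frac{\Wh_{x,t}(\tau)-\Nh_{x,t}(\tau)}{\tau}\leqs 0.
\end{equation*}

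For the final inequality, I would unfold the definitions of $\Nh$ and $\Wh$ in $\Nh_{x,t}(\tau)\geqs\Wh_{x,t}(\tau)$: the $-\frac{1}{3}P_{x,t}(\tau)$ terms cancel, leaving $\Np_{x,t}(\tau)\geqs\Wp_{x,t}(\tau)-\frac{1}{3}\Psi_{x,t}(\tau)$. Plugging in $\Np_{x,t}(\tau)=\int f\dif\nu_{t-\tau}-\frac{n}{2}$ and $\Wp_{x,t}(\tau)=\int[\tau(R^\phi+|\nabla f|^2)+f-n]\dif\nu_{t-\tau}$, the $\int f\dif\nu$ terms cancel and one rearranges to obtain $\int_M\tau(R^\phi+|\nabla f|^2)\dif\nu_{t-\tau}\leqs\frac{n}{2}+\frac{1}{3}\Psi_{x,t}(\tau)$.

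The only nonroutine point is justifying the boundary condition $\lim_{\tau\searrow 0}\tau\Nh_{x,t}(\tau)=0$ used in the integration. Since $P_{x,t}(\tau)$ is an integral starting at $0$ with integrand bounded near $s=0$ by $s^{-1}\Psi_{x,t}(s)=O(1)$ (using $\Psi_{x,t}(0)=0$, $\Psi'_{x,t}(0)=|H|^2(x,t)$), the $P$-term contributes $o(1)$ after multiplication by $\tau$. The vanishing of $\tau\Np_{x,t}(\tau)$ as $\tau\searrow 0$ is the standard small-time asymptotic for the weighted Nash entropy associated to the conjugate heat kernel, which mirrors the classical Ricci flow fact and follows from the short-time Gaussian behavior of $K(x,t;\cdot,t-\tau)$. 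This small-time control is the only step requiring care; the rest is algebraic manipulation of the monotonicity proposition.
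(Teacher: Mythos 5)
Your proof is correct and follows essentially the same route as the paper: integrate the identity $\ddtt(\tau\Nh_{x,t})=\Wh_{x,t}$, use the monotonicity of $\Wh_{x,t}$ to deduce $\Nh_{x,t}\geqs\Wh_{x,t}$ and $\ddtt\Nh_{x,t}\leqs0$, then unfold the definitions of $\Nh$ and $\Wh$ to get the third inequality. You are somewhat more careful than the paper's terse two-line proof in spelling out the boundary condition $\tau\Nh_{x,t}(\tau)\to0$ as $\tau\searrow0$, which the paper tacitly assumes.
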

	\begin{proof}The first follows from $\tau\ddtt\Nh=\Wh-\Nh\leqs0$.
		The second conclusion follows from $\Wh=\tau\int(R^\phi+|\nabla f|^2)\dif\nu+\Nh-\hn-\frac{1}{3}\Psi_{x,t}(\tau)$.
	\end{proof}
	We need an oscillation estimation of Nash entropy analogous to \cite[Corollary 5.11]{bam1}, which can be divided by two parts to estimate. The first term, weighted entropy, can be dealt as \cite[Theorem 5.9]{bam1}.
	\begin{proposition}\label{heatnash}
		Suppose $R^\phi(\cdot,s)\geqs R^\phi_{\min}$, then
		$$|\nabla \Npp_s|\leqs\left(\frac{n}{2(t-s)}-R^\phi_{\min}+\frac{1}{3(t-s)}\Psi_{x,t}(t-s)\right)^{\frac{1}{2}},\ \ \ -\frac{n}{2(t-s)}\leqs\square \Npp_s\leqs0.$$
	\end{proposition}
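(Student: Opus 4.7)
The plan is to differentiate the explicit expression for $\Npp_s$ in $(x,t)$ directly, derive a Bakry--Emery type gradient estimate from a Bochner computation adapted to the generalized Ricci flow, and invoke the integrated Perelman bound in the preceding corollary. Writing
\[
\Npp_s(x,t) = -\int_M K\log K\,dg_s - \int_M \phi K\,dg_s - \tfrac{n}{2}\log(4\pi(t-s)) - \tfrac{n}{2},
\]
and using $\partial_t K = \Delta_x K$ (from $\square_{x,t}K=0$), $\int_M K\,dg_s = 1$ (hence $\int \partial_t K\,dg_s = \int \nabla_x K\,dg_s = 0$), and the independence of $\phi=\phi(y,s)$ from $(x,t)$, a direct calculation gives
\[
\nabla_x\Npp_s = \int_M f\,\nabla_x K\,dg_s, \qquad \square_{x,t}\Npp_s = \int_M\frac{|\nabla_x K|^2}{K}\,dg_s - \frac{n}{2(t-s)}.
\]
The lower bound $\square\Npp_s \geqs -\tfrac{n}{2(t-s)}$ is then immediate from $\int_M|\nabla_x K|^2/K\,dg_s \geqs 0$.

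For the gradient estimate, the first identity shows that $\nabla_x\Npp_s(x_0,t_0) = \nabla_x u(x_0,t_0)$, where $u(x,t) := \int_M f(x_0,t_0;y)\,K(x,t;y,s)\,dg_s(y)$ is the forward heat evolution of the $(x,t)$-independent initial datum $y\mapsto f(x_0,t_0;y)$, so in particular $\square u = 0$. Combining Bochner's formula with the generalized Ricci flow equation $\partial_t g = -2\ric + \tfrac{1}{2}H^2$, and using $H^2(X,X) = |i_X H|^2 \geqs 0$, yields
\[
\square|\nabla u|^2 = -2|\nabla^2 u|^2 - \tfrac{1}{2}H^2(\nabla u,\nabla u) \leqs 0.
\]
Thus $|\nabla u|^2$ is a supersolution of the forward heat equation, and the maximum principle gives $|\nabla u(x_0,t_0)|^2 \leqs \int_M K(x_0,t_0;y,s)\,|\nabla_y f|^2\,dg_s(y) = \int_M|\nabla f|^2\,d\nu_{x_0,t_0;s}$. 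Applying the preceding corollary, which bounds $\int|\nabla f|^2\,d\nu \leqs \tfrac{n}{2(t-s)} - R^\phi_{\min} + \tfrac{1}{3(t-s)}\Psi_{x_0,t_0}(t_0-s)$ (via $R^\phi \geqs R^\phi_{\min}$), completes the gradient estimate.

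It remains to show $\square\Npp_s \leqs 0$, i.e.\ $\int_M |\nabla_x\log K|^2\,d\nu_{x,t;s} \leqs \tfrac{n}{2(t-s)}$. I expect this to follow from a Hamilton--Li--Yau type differential Harnack inequality $\Delta_x\log K \geqs -\tfrac{n}{2(t-s)}$ for positive solutions of $\square u = 0$ on the generalized Ricci flow, integrated against $K\,dg_s$ (using $\int \Delta_x\log K \cdot K\,dg_s = -\int |\nabla_x K|^2/K\,dg_s$). The main obstacle is verifying Hamilton's maximum-principle argument for the auxiliary quantity $(t-s)(\partial_t\log K - |\nabla_x\log K|^2) + \tfrac{n}{2}$ on the generalized flow; as in the Bochner computation above, the $H^2$ contributions arising from $\partial_t g$ should enter with the favorable sign, paralleling the Ricci flow case.
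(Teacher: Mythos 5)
Your gradient estimate is correct but takes a genuinely different route from the paper. The paper differentiates the entropy integral to get $\partial_v\Npp_s=\int\frac{\partial_vK}{K}\bigl(f-\tfrac n2-\Npp_s\bigr)\,\dif\nu$, then combines Cauchy--Schwarz, the $L^2$-Poincar\'e inequality of Hein--Naber (to bound the variance $\int(f-\tfrac n2-\Npp_s)^2\dif\nu\leqs 2(t-s)\int|\nabla f|^2\dif\nu$), and the heat-kernel gradient integral bound from \cite[Prop.~4.2]{bam1}. You instead observe that $\nabla_x\Npp_s(x_0,t_0)=\nabla_xu(x_0,t_0)$ for the forward heat solution $u$ with initial datum $f(x_0,t_0;\cdot)$ at time $s$, verify $\square|\nabla u|^2=-2|\nabla^2u|^2-\tfrac12H^2(\nabla u,\nabla u)\leqs0$ (the $H^2\geqs0$ sign condition is exactly what makes this work on generalized Ricci flow), and apply the maximum principle to get $|\nabla\Npp_s|^2\leqs\int|\nabla f|^2\dif\nu$; the preceding corollary then finishes. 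This is self-contained, bypasses the Poincar\'e inequality altogether, and gives the same constant. It is a legitimate alternative, arguably cleaner.

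However, you have a genuine gap in the bound $\square\Npp_s\leqs0$. You correctly reduce it to $\int_M\frac{|\nabla_xK|^2}{K}\,\dif g_s\leqs\frac{n}{2(t-s)}$, but then only say you ``expect'' this to follow from a Li--Yau--Hamilton-type inequality $\Delta_x\log K\geqs-\frac{n}{2(t-s)}$ and that ``the main obstacle is verifying Hamilton's maximum-principle argument for the auxiliary quantity.'' That is a statement of what remains to be done, not a proof. The paper does not need a pointwise Harnack here at all: it cites \cite[Prop.~4.2]{bam1}, which gives precisely the needed integral bound $\int_M\frac{|\nabla_xK|^2}{K}\,\dif g_s\leqs\frac{n}{2(t-s)}$ for any super Ricci flow, and generalized Ricci flow is a super Ricci flow since $\pdt g\geqs-2\ric$. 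Either cite that estimate directly (as the paper does) or carry out the Harnack computation in full; as written, the second half of the proposition is unproved.
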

	\begin{proof}
		The proof follows the argument in \cite[Proof of Theorem 5.9]{bam1} with a modification. We assume $s=0$ after application of a time-shift.
		$$\Npp_0(x,t)=-\int K(x,t;y,0)(\log K(x,t;y,0)+\phi(y,0))\dif g_0(y)-\frac{n}{2}\log(4\pi t)-\frac{n}{2}.$$
		For any vector $v\in T_xM$ with $|v|_t=1$, 
		$$\begin{aligned}
			\partial_v\Npp_0(x,t)&=-\int\partial_vK(x,t;y,0)(\log K(x,t;y,0)+1+\phi(y,0))\dif g_0(y)\\
			&=\int\partial_vK(x,t;y,0)\left(f-\frac{n}{2}-\Npp_0(x,t)\right)\dif g_0(y)\\
			&=\int\frac{\partial_vK(x,t;y,0)}{K(x,t;y,0)}\left(f-\frac{n}{2}-\Npp_0(x,t)\right)\dif\nu_{x,t;0}(y).
		\end{aligned}
		$$
		$$\int\left(f-\frac{n}{2}-\Npp_0(x,t)\right)^2\dif\nu(y)\leqs 2t\int|\nabla f|^2\dif\nu\leqs n-2tR^\phi_{\min}+\frac{2}{3}\Psi_{x,t}(t).$$
		$$|\partial_v\Npp_0|^2(x,t)\leqs\frac{1}{2t}\left(n-2tR^\phi_{\min}+\frac{2}{3}\Psi_{x,t}(t)\right).$$
		For the last two inequality, we use the $L^2$-Poincar\'{e} inequality in \cite[Theorem 1.10]{HN14} and the integral bounds on the gradient of the heat kernel in \cite[Proposition 4.2]{bam1}. Both of them hold for super Ricci flow.
		
		The proof of the second result is same as that in \cite{bam1}.
		\begin{equation*}
			\begin{aligned}
				\square\Npp_0(x,t)&=-\int_M\square_{x,t}\left(K(x,t;y,0)(\log K(x,t;y,0)+\phi(y,0))\right)\dif g_0(y)-\frac{n}{2t}\\
				&=-\int_M\left(\square_{x,t}K(\log K+\phi(y,0)+1)-\frac{|\nabla_xK(x,t;y,0)|^2}{K(x,t;y,0)}\right)\dif g_0(y)-\frac{n}{2t}\\
				&=\int_M\frac{|\nabla_xK(x,t;y,0)|^2}{K(x,t;y,0)}\dif g_0(y)-\frac{n}{2t}.
			\end{aligned}
		\end{equation*} 
		Then the second results also follow from the integral bounds of the heat kernel, see \cite[Proposition 4.2]{bam1}.
	\end{proof}
	\begin{definition}
		Given two probability measure $\mu_1,\mu_2$ on $(M,g)$, define the \emph{$W_1$-Wasserstein distance} by
		$$d_{W_1}^g(\mu_1,\mu_2):=\sup\left\{\int_Mf\dif(\mu_1-\mu_2):f\in C^{\infty}(M),|\nabla f|\leqs 1\right\}.$$
	\end{definition}
	\begin{lemma}\label{w1monotone}
		Let $(M,g(t),H(t))$ be a generalized Ricci flow, $x_1,x_2\in M$, then
		\begin{align*}
			t\mapsto d_{W_1}^{g(t)}(\nu_{x_1,t_1}(t),\nu_{x_2,t_2}(t))
		\end{align*}
		is non-decreasing for $t\leqs t_1,t_2$.
	\end{lemma}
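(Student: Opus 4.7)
The strategy is to invoke Kantorovich-Rubinstein duality and propagate a $1$-Lipschitz bound forward in time along the heat equation. Fix $t_a\leqs t_b\leqs\min(t_1,t_2)$ and let $f\in C^\infty(M)$ satisfy $|\nabla f|_{g(t_a)}\leqs 1$. I would take $u$ to be the solution of $\square u=0$ on $M\times[t_a,t_b]$ with initial datum $u(\cdot,t_a)=f$.

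The key computation is a Bochner-type identity adapted to the generalized Ricci flow. Differentiating $|\nabla u|^2=g^{ij}u_iu_j$ with the help of $\pdt g_{ij}=-2R_{ij}+\frac12H^2_{ij}$, so that $\pdt g^{ij}=2R^{ij}-\frac12(H^2)^{ij}$, and combining with the usual Bochner identity for $\Delta|\nabla u|^2$, the $\ric$ terms cancel and one obtains
\begin{equation*}
\square|\nabla u|^2=-2|\nabla^2u|^2-\tfrac12H^2(\nabla u,\nabla u)\leqs 0,
\end{equation*}
using $H^2(X,X)=|i_XH|^2\geqs 0$. The maximum principle then forces $\sup_M|\nabla u(\cdot,t)|_{g(t)}$ to be non-increasing in $t$, so $u(\cdot,t_b)$ is $1$-Lipschitz with respect to $g(t_b)$.

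To conclude, I would use that $\int_Mu(\cdot,t)\,\dif\nu_{x_i,t_i}(t)$ is independent of $t$, which follows from $\square u=0$, $\square^*_{y,s}K(x_i,t_i;y,s)=0$, and the conjugacy identity already recorded in the paper. Subtracting for $i=1,2$,
\begin{equation*}
\int f\,\dif(\nu_{x_1,t_1}(t_a)-\nu_{x_2,t_2}(t_a))=\int u(\cdot,t_b)\,\dif(\nu_{x_1,t_1}(t_b)-\nu_{x_2,t_2}(t_b))\leqs d_{W_1}^{g(t_b)}(\nu_{x_1,t_1}(t_b),\nu_{x_2,t_2}(t_b))
\end{equation*}
by the dual characterization applied to the $1$-Lipschitz function $u(\cdot,t_b)$; taking the supremum over admissible $f$ at time $t_a$ yields the monotonicity. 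The only point requiring care is the justification of the maximum principle for $|\nabla u|^2$ on noncompact $M$, which can be handled by the standard Gaussian-type decay of the heat kernel and its gradient (cf. \cite[Proposition 4.2]{bam1}); the torsion contribution $-\tfrac12H^2(\nabla u,\nabla u)$ is favorable, so it adds no difficulty beyond the classical Ricci flow case.
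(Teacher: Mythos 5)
Your proof is correct and takes essentially the same approach as the paper: the paper simply observes that generalized Ricci flow is a super Ricci flow (since $H^2(X,X)\geqs0$) and invokes \cite[Lemma 2.7]{bam1}, whose proof is precisely the Kantorovich duality plus forward heat-equation Bochner argument you wrote out. Your explicit computation $\square|\nabla u|^2=-2|\nabla^2u|^2-\tfrac12H^2(\nabla u,\nabla u)\leqs0$ is exactly the super-Ricci-flow inequality that makes Bamler's lemma apply here.
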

	\begin{proof}
		Note that the generalized Ricci flow is a super Ricci flow, then it follows from \cite[Lemma 2.7]{bam1}.
	\end{proof}
	\begin{corollary}\label{nashosc}
		Suppose $R^\phi(\cdot,s)\geqs R^\phi_{\min}$, $|H|^2\leqs A$ on $t\in[s,t^*]$, $s<t^*\leqs t_1,t_2$, then for $x_1,x_2\in 
		M$
		$$\Npp_s(x_1,t_1)-\Npp_s(x_2,t_2)\leqs\left(\frac{n}{2(t^*-s)}-R^\phi_{\min}+\frac{1}{3}A\right)^{\frac{1}{2}}d_{W_1}^{t^*}\left(\nu_{x_1,t_1}(t^*),\nu_{x_2,t_2}(t^*)\right)+\frac{n}{2}\log\left(\frac{t_2-s}{t^*-s}\right).$$
	\end{corollary}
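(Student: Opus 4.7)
The plan is to decompose the difference $\Npp_s(x_1,t_1)-\Npp_s(x_2,t_2)$ into two ``vertical'' pieces that compare each $\Npp_s(x_i,t_i)$ with its average of $\Npp_s(\cdot,t^*)$ against the conjugate heat kernel measure at the common time $t^*$, plus one ``horizontal'' piece that compares these two averages at $t^*$. Concretely, write
\[
\Npp_s(x_1,t_1)-\Npp_s(x_2,t_2)=A+B+C,
\]
where $A:=\Npp_s(x_1,t_1)-\int_M\Npp_s(\cdot,t^*)\,\dif\nu_{x_1,t_1;t^*}$, the middle term $B$ is the difference of the two integrals of $\Npp_s(\cdot,t^*)$ against $\dif\nu_{x_1,t_1;t^*}$ and $\dif\nu_{x_2,t_2;t^*}$, and $C:=\int_M\Npp_s(\cdot,t^*)\,\dif\nu_{x_2,t_2;t^*}-\Npp_s(x_2,t_2)$.

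For $A$ and $C$ I would use the conjugate heat kernel identity
\[
\int_M u\,K(x_i,t_i;\cdot,t)\,\dif g_t\Big|_{t=t^*}^{t=t_i}=\int_{t^*}^{t_i}\int_M(\square u)\,K(x_i,t_i;\cdot,t)\,\dif g_t\,\dif t,
\]
which follows from $\square^*_{y,t}K=0$, applied with $u=\Npp_s$. The one-sided bound $\square\Npp_s\leqs 0$ from Proposition \ref{heatnash} forces $A\leqs 0$. The lower bound $\square\Npp_s\geqs-\tfrac{n}{2(t-s)}$ from the same proposition, integrated in $t$ from $t^*$ to $t_2$ against the probability measure $K(x_2,t_2;\cdot,t)\,\dif g_t$, yields
\[
C\leqs\int_{t^*}^{t_2}\frac{n}{2(t-s)}\,\dif t=\frac{n}{2}\log\frac{t_2-s}{t^*-s}.
\]

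For $B$ I would invoke Kantorovich duality for the $W_1$-distance, which requires a pointwise Lipschitz bound on $y\mapsto\Npp_s(y,t^*)$ that is uniform in $y$. Proposition \ref{heatnash} applied at the base point $(y,t^*)$ gives
\[
|\nabla\Npp_s|(y,t^*)\leqs\left(\frac{n}{2(t^*-s)}-R^\phi_{\min}+\frac{1}{3(t^*-s)}\Psi_{y,t^*}(t^*-s)\right)^{1/2},
\]
so the only remaining issue is making the $\Psi$-correction uniform in $y$. This is precisely where the hypothesis $|H|^2\leqs A$ on $[s,t^*]$ is used: since $\nu_{y,t^*;s'}$ is a probability measure for each $s'\in[s,t^*]$, one gets $\Psi_{y,t^*}(t^*-s)\leqs A(t^*-s)$ independently of $y$, which produces the uniform Lipschitz constant $L:=(\tfrac{n}{2(t^*-s)}-R^\phi_{\min}+\tfrac{A}{3})^{1/2}$ and hence $B\leqs L\cdot d_{W_1}^{t^*}(\nu_{x_1,t_1}(t^*),\nu_{x_2,t_2}(t^*))$ by duality. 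Summing the three estimates gives the claim. The only step that requires care beyond the Ricci flow argument in \cite[Corollary 5.11]{bam1} is this uniformization of the torsion-dependent $\Psi$-term in the gradient bound, but this is immediate from $|H|^2\leqs A$.
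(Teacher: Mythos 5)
Your proposal is correct and reproduces, explicitly, the argument the paper merely cites: the same three-term decomposition into two vertical comparisons across time (handled by the two-sided bound on $\square\Npp_s$ from Proposition~\ref{heatnash} integrated against the conjugate heat kernel measure) and one horizontal comparison at time $t^*$ (handled by Kantorovich duality with the gradient bound from Proposition~\ref{heatnash}), which is exactly Bamler's scheme in \cite[Corollary 5.11]{bam1}. You also correctly flag the single new ingredient in this setting, namely that $|H|^2\leqs A$ on $[s,t^*]$ makes the $\Psi_{y,t^*}(t^*-s)\leqs A(t^*-s)$ bound uniform in $y$, which yields the uniform Lipschitz constant needed for the $W_1$ duality step.
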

	\begin{proof}
		It follows from \Cref{heatnash} with same argument as \cite[Proof of Corollary 5.11]{bam1}.
	\end{proof}
	We can get a gradient bound on $P^*_s$ by direct computation.
	\begin{proposition}\label{pf2.19}
		Suppose $|H|^2\leqs A$, $P^*_s(x,t):=P_{x,t}(t-s)$, we have $|\nabla P^*_{s}(x,t)|\leqs C_nA\sqrt{t-s}$.
	\end{proposition}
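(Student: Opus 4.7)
The plan is to write $P^*_s(x,t)$ as a single weighted integral against the conjugate heat kernel, differentiate under the integral in $x$, and then apply the integral gradient bound for $K$ in the form used already in \Cref{heatnash}.

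First I would swap the order of integration in the definition of $P$. For $\tau = t-s$,
\begin{equation*}
P^*_s(x,t)=\int_0^{\tau}\frac{1}{r}\int_{t-r}^t\int_M|H|^2\,\dif\nu_{x,t;s'}\,\dif s'\,\dif r=\int_s^t\log\!\left(\frac{t-s}{t-s'}\right)\int_M|H|^2(y,s')\,K(x,t;y,s')\,\dif g_{s'}(y)\,\dif s',
\end{equation*}
since $\{(r,s'):0\leqs r\leqs\tau,\,t-r\leqs s'\leqs t\}=\{(r,s'):s\leqs s'\leqs t,\,t-s'\leqs r\leqs\tau\}$. The log factor is independent of $x$, so differentiation in $x$ only hits the kernel:
\begin{equation*}
\nabla_xP^*_s(x,t)=\int_s^t\log\!\left(\frac{t-s}{t-s'}\right)\int_M|H|^2(y,s')\,\nabla_xK(x,t;y,s')\,\dif g_{s'}(y)\,\dif s'.
\end{equation*}

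Next I would estimate the inner integral by Cauchy--Schwarz with respect to $\dif g_{s'}$, splitting $\nabla_xK = K^{1/2}\cdot K^{-1/2}\nabla_xK$:
\begin{equation*}
\left|\int_M|H|^2\,\nabla_xK\,\dif g_{s'}\right|\leqs\left(\int_M|H|^4\,K\,\dif g_{s'}\right)^{\!1/2}\!\left(\int_M\frac{|\nabla_xK|^2}{K}\,\dif g_{s'}\right)^{\!1/2}.
\end{equation*}
The hypothesis $|H|^2\leqs A$ gives $\int_M|H|^4K\,\dif g_{s'}\leqs A^2$, and the integral gradient estimate for $K$ on super Ricci flow (\cite[Proposition 4.2]{bam1}, as used in the proof of \Cref{heatnash}) gives $\int_M |\nabla_xK|^2/K\,\dif g_{s'}\leqs \frac{n}{2(t-s')}$. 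Therefore
\begin{equation*}
\left|\int_M|H|^2\,\nabla_xK\,\dif g_{s'}\right|\leqs A\sqrt{\frac{n}{2(t-s')}}.
\end{equation*}

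Plugging this into the time integral and substituting $u=(t-s')/(t-s)$ gives
\begin{equation*}
|\nabla_xP^*_s(x,t)|\leqs A\sqrt{\frac{n}{2}}\int_s^t\log\!\left(\frac{t-s}{t-s'}\right)\frac{\dif s'}{\sqrt{t-s'}}=A\sqrt{\frac{n}{2}}\sqrt{t-s}\int_0^1\frac{-\log u}{\sqrt{u}}\,\dif u,
\end{equation*}
and the last integral equals the explicit constant $4$ by integration by parts. This yields $|\nabla P^*_s(x,t)|\leqs C_nA\sqrt{t-s}$ with $C_n=2\sqrt{2n}$. The only nontrivial step is justifying that we may differentiate under the double integral at $x$; this is routine because $K$ is smooth for $t>s'$ and the $|H|^2$--weight is bounded, but I would need to be slightly careful at the endpoint $s'=t$ where the log factor vanishes but the kernel concentrates, which is exactly why the swap of integration is convenient (the log factor kills the singularity).
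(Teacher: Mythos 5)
Your proof takes essentially the same route as the paper: both swap the order of integration to write $P^*_s$ as a single time integral weighted by $\log\bigl((t-s)/(t-s')\bigr)$, differentiate in $x$ under the integral, and then control the inner spatial integral using the $L^2$ gradient bound on the conjugate heat kernel from \cite[Proposition 4.2]{bam1} together with $|H|^2\leqs A$ and $\int_M K\,\dif g_{s'}=1$, finishing with the elementary time integral $\int_0^1(-\log u)u^{-1/2}\dif u=4$. The only cosmetic difference is that you incorporate $|H|^2$ into the Cauchy--Schwarz factor $\int|H|^4K$ rather than pulling out $A$ first, and you track the constant $C_n=2\sqrt{2n}$ explicitly.
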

	\begin{proof}By changing the order of integration, we have
		$$
		\begin{aligned}
			P^*_{t-\tau}(x,t)&=\int_0^\tau s\iv\Psi_{x,t}(s)\dif s=\int_0^{\tau}\int_0^s\int_Ms\iv|H|^2\dif\nu_{t-r}\dif r\dif s\\
			&=\int_0^\tau\int_r^\tau\int_Ms\iv|H|^2\dif\nu_{t-r}\dif s\dif r=\int_0^\tau\log\left(\frac{\tau}{s}\right)\int_M|H|^2\dif\nu_{t-s}\dif s.
		\end{aligned}$$
		$$
		\begin{aligned}
			\left|\nabla P^*_{t-\tau}\right|&\leqs\int_0^\tau\log\left(\frac{\tau}{s}\right)\int_M|H|^2\frac{\left|\nabla K(x,t;y,t-s)\right|}{K(x,t;y,t-s)}\dif\nu_{t-s}(y)\dif s\\
			&\leqs C_nA\int_0^\tau s^{-\frac{1}{2}}\log\left(\frac{\tau}{s}\right)\dif s=C_nA\sqrt{\tau}.
		\end{aligned}$$
		where we use integral bounds on the gradient of heat kernel along super Ricci flow in \cite[Proposition 4.2]{bam1}.
	\end{proof}
	The oscillation bound of $P^*_s$ follows from a straightforward computation:
	\begin{proposition}\label{Posc}
		Suppose $0<t^*<t_1<t_2$, $|H|^2+|\nabla H|^2\leqs C$, $d_{W_1}^{t^*}(\nu_{x_1,t_1}(t^*),\nu_{x_2,t_2}(t^*))\leqs \delta$, then
		$$P^*_0(x_1,t_1)\leqs P^*_0(x_2,t_2)+C\left(t_1-(1-\delta)\int_0^{t^*}\log\left(\frac{t_2}{t_2-s}\right)\dif s\right).$$
		In particular, assume $t_2-t^*<\varepsilon<10\iv$, $0<T\iv<t_2<T$, then
		$$P^*_0(x_1,t_1)\leqs P^*_0(x_2,t_2)+C(\varepsilon T-\varepsilon\log\varepsilon+2\delta T).$$
	\end{proposition}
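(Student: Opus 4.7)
The plan is to start from the alternate expression for $P^*_0$ derived during the proof of \Cref{pf2.19}: substituting $r = t_i - s$ gives
\[
P^*_0(x_i, t_i) = \int_0^{t_i} \log\!\left(\frac{t_i}{t_i - r}\right) \int_M |H|^2 \dif\nu_{x_i, t_i; r}\, \dif r,
\]
so that $r$ runs through forward time. I split each integral at $r = t^*$: the $[t^*, t_2]$ part of $P^*_0(x_2, t_2)$ is nonnegative and is dropped, while the $[t^*, t_1]$ part of $P^*_0(x_1, t_1)$ is bounded pointwise by $C\log(t_1/(t_1-r))$ using $|H|^2 \leqs C$.

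On $[0, t^*]$, the pointwise difference of integrands is decomposed as
\[
\left[\log\tfrac{t_1}{t_1-r} - \log\tfrac{t_2}{t_2-r}\right]\!\int\!|H|^2 \dif\nu_{x_1,t_1;r} + \log\tfrac{t_2}{t_2-r} \left[\int\!|H|^2 \dif\nu_{x_1,t_1;r} - \int\!|H|^2 \dif\nu_{x_2,t_2;r}\right].
\]
Since $T \mapsto \log(T/(T-r))$ is decreasing in $T$, the first bracket is positive, and combined with $|H|^2 \leqs C$ this summand is at most $C\bigl[\log(t_1/(t_1-r)) - \log(t_2/(t_2-r))\bigr]$. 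For the second summand, \Cref{w1monotone} propagates the Wasserstein bound backward, so $d_{W_1}^{g(r)}(\nu_{x_1,t_1;r}, \nu_{x_2,t_2;r}) \leqs \delta$ for every $r \in [0, t^*]$. By AM-GM, $|\nabla|H|^2| \leqs 2|H||\nabla H| \leqs |H|^2 + |\nabla H|^2 \leqs C$, so the definition of $d_{W_1}$ gives $\bigl|\int|H|^2 \dif\nu_{x_1,t_1;r} - \int|H|^2 \dif\nu_{x_2,t_2;r}\bigr| \leqs C\delta$. Summing these pointwise bounds with the $[t^*, t_1]$ estimate and invoking the identity $\int_0^T \log(T/(T-r))\dif r = T$ yields the stated main inequality with a clean factor $(1-\delta)$.

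For the second assertion, the substitution $u = t_2-s$ computes
\[
\int_0^{t^*}\log\!\left(\frac{t_2}{t_2-s}\right)\dif s = t^* - (t_2-t^*)\log\!\left(\frac{t_2}{t_2-t^*}\right),
\]
so that
\[
t_1 - (1-\delta)\int_0^{t^*}\log\!\left(\frac{t_2}{t_2-s}\right)\dif s = (t_1-t^*) + \delta t^* + (1-\delta)(t_2-t^*)\log\!\left(\frac{t_2}{t_2-t^*}\right).
\]
The three summands are controlled by $t_1 - t^* < t_2 - t^* < \varepsilon$, $\delta t^* < \delta T$, and (for the last) the elementary estimate $-a\log a \leqs -\varepsilon\log\varepsilon$ on $(0, \varepsilon)$ when $\varepsilon < 1/e$ together with $\log t_2 \leqs \log T \leqs T$, giving $(t_2-t^*)\log(t_2/(t_2-t^*)) \leqs \varepsilon T - \varepsilon\log\varepsilon$. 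Absorbing numerical constants into $C$ yields the claim. The main subtlety I expect is choosing the correct telescoping order in the key step: using $\log(t_2/(t_2-r))$ rather than $\log(t_1/(t_1-r))$ as the common log factor when isolating the difference of measures is exactly what aligns the Wasserstein error with the $\int_0^{t^*}\log(t_2/(t_2-s))\dif s$ term on the right-hand side and produces the clean coefficient $(1-\delta)$ instead of a $(1-2\delta)$ or an uncombinable residual term.
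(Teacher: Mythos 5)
Your proposal reproduces the paper's proof essentially line for line: the same change-of-variables formula for $P^*_0$, the same split at $t^*$ (dropping the tail for $(x_2,t_2)$, bounding it crudely for $(x_1,t_1)$), the same $I_1+I_2$ decomposition with $\log(t_2/(t_2-r))$ chosen as the common log factor, and the same use of \Cref{w1monotone} together with $|\nabla|H|^2|\leqs C$ to control the Wasserstein term. Your handling of the final numerical estimates differs only cosmetically — you expand $(1-\delta)\int_0^{t^*}\log(t_2/(t_2-s))\dif s$ into three explicit summands before bounding, whereas the paper bounds $t_1-\int_0^{t^*}$ and $\delta\int_0^{t^*}$ separately — but both routes yield the same $C(\varepsilon T-\varepsilon\log\varepsilon+2\delta T)$ after absorbing constants, so this is the same argument.
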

	\begin{proof}As the computation in Proof of Proposition \ref{pf2.19}, we have
		\begin{align*}
			P^*_0(x_i,t_i)=\int_0^{t_i}\log\left(\frac{t_i}{t_i-s}\right)\int_M|H|^2\dif\nu_{x_i,t_i}(s)\dif s=\int_0^{t^*}+\int_{t^*}^{t_i},
		\end{align*}
		where $i=1,2$.
		For the second term, we have
		$$0\leqs\int_{t^*}^{t_i}\log\left(\frac{t_i}{t_i-s}\right)\int_M|H|^2\dif\nu_{x_i,t_i}(s)\dif s\leqs C\int_{t^*}^{t_i}\log\left(\frac{t_i}{t_i-s}\right)\dif s,$$
		where we use $|H|^2\leqslant C$. Then we have
		\begin{align*}
			P^*_0(x_1,t_1)-P^*_0(x_2,t_2)\leqslant& \int_0^{t^*}\log\left(\frac{t_1}{t_1-s}\right)\int_M|H|^2\dif\nu_{x_1,t_1}(s)\dif s+C\int_{t^*}^{t_1}\log\left(\frac{t_1}{t_1-s}\right)\dif s\\
			&-\int_0^{t^*}\log\left(\frac{t_2}{t_2-s}\right)\int_M|H|^2\dif\nu_{x_2,t_2}(s)\dif s.
		\end{align*}
		By lemma \ref{w1monotone}, we have $d_{W_1}^{t}(\nu_{x_1,t_1}(t),\nu_{x_2,t_2}(t))\leqs \delta$ for any $t\leqs t^*$.
		$$
		\begin{aligned}
			&\int_0^{t^*}\log\left(\frac{t_1}{t_1-s}\right)\int_M|H|^2\dif\nu_{x_1,t_1}(s)\dif s-\int_0^{t^*}\log\left(\frac{t_2}{t_2-s}\right)\int_M|H|^2\dif\nu_{x_2,t_2}(s)\dif s\\
			=&\ I_1+I_2,
		\end{aligned} $$
		where
		\begin{align*}
			I_1&:=\int_0^{t^*}\left[\log\left(\frac{t_1}{t_1-s}\right)-\log\left(\frac{t_2}{t_2-s}\right)\right]\int_M|H|^2\dif\nu_{x_1,t_1}(s)\dif s\\
			&\leqs C\int_0^{t^*}\left[\log\left(\frac{t_1}{t_1-s}\right)-\log\left(\frac{t_2}{t_2-s}\right)\right]\dif s,\\
			I_2&:=\int_0^{t^*}\log\left(\frac{t_2}{t_2-s}\right)\int_M|H|^2\dif\left(\nu_{x_1,t_1}(s)-\nu_{x_2,t_2}(s)\right)\dif s\\
			&\leqs C\delta\int_0^{t^*}\log\left(\frac{t_2}{t_2-s}\right)\dif s.
		\end{align*}
		The inequality on $I_1$ is due to $|H|^2\leqslant C$ and the fact that $\log\left(\frac{t_1}{t_1-s}\right)-\log\left(\frac{t_2}{t_2-s}\right)\geqslant0$. The inequality on $I_2$ is due to the definition of $d_{W_1}$ and $\left|\nabla|H|^2\right|\leqslant 2|H||\nabla H|\leqslant C$.
		
		Combining above, we have
		\begin{align*}
			P^*_0(x_1,t_1)-P^*_0(x_2,t_2)\leqs&C\left\{\int_0^{t^*}\left[\log\left(\frac{t_1}{t_1-s}\right)-\log\left(\frac{t_2}{t_2-s}\right)\right]\dif s+\delta\int_0^{t^*}\log\left(\frac{t_2}{t_2-s}\right)\dif s\right.\\
			&\left.+\int_{t^*}^{t_1}\log\left(\frac{t_1}{t_1-s}\right)\dif s\right\}\\
			 \leqs&C\left\{\int_0^{t_1}\log\left(\frac{t_1}{t_1-s}\right)\dif s-\int_0^{t^*}\log\left(\frac{t_2}{t_2-s}\right)\dif s+\delta\int_0^{t^*}\log\left(\frac{t_2}{t_2-s}\right)\dif s\right\}\\
			\leqslant&C\left(t_1-(1-\delta)\int_0^{t^*}\log\left(\frac{t_2}{t_2-s}\right)\dif s\right).
		\end{align*}
		If $t_2-t^*<\varepsilon<10\iv$, $0<T\iv<t_2<T$, then
		\begin{align*}
			\int_0^{t^*}\log\left(\frac{t_2}{t_2-s}\right)\dif s=t^*\log t_2-t_2\log t_2+t^*+(t_2-t^*)\log(t_2-t^*)\leqs T+\log T\leqs 2T,
		\end{align*}
		\begin{align*}
			t_1-\int_0^{t^*}\log\left(\frac{t_2}{t_2-s}\right)\dif s=&(t_1-t^*)+(t_2-t^*)\log t_2-(t_2-t^*)\log(t_2-t^*)\\
			\leqslant& \varepsilon+\varepsilon \log T-\varepsilon\log\varepsilon\\
			\leqslant& \varepsilon T-\varepsilon\log\varepsilon.
		\end{align*}
		So
		\begin{align*}
			P^*_0(x_1,t_1)-P^*_0(x_2,t_2)\leqs C(\varepsilon T-\varepsilon\log\varepsilon+2\delta T).
		\end{align*}
	\end{proof}
	\subsection{Weighted noncollapsed}
	\begin{definition}
		Define the \emph{$\phi$-weighted volume} by
		$$|B(x,t,r)|_\phi:=\int_{B(x,t,r)}e^{-\phi(y,t)}\dif g_t(y).$$
	\end{definition}
	\begin{remark}
		Since $\square\phi=\frac16|H|^2\geqslant0$, if we assume $\phi$ has nonnegative initial data, then $\phi(x,t)$ will always be nonnegative by maximum principle. Then we have $|B(x,t,r)|_\phi\leqs|B(x,t,r)|_{g_t}$. Thus weighted noncollapsed condition is stronger than the conventional condition.
	\end{remark}
	\begin{theorem}\label{noncollaped}
		Suppose $[t_0-r^2,t_0]\subset I$, $|R^\phi|+|H|^2\leqs Cr^{-2}$ on $B(x_0,t_0,r)\times[t_0-r^2,t_0]$, 
		$\phi\geqs0$, then
		$$|B(x_0,t_0,r)|_\phi\geqs c\exp(\N^H_{x_0,t_0}(r^2))r^n.$$
	\end{theorem}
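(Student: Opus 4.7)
The plan is to adapt the cutoff/test-function argument of \cite[Theorem 6.1]{bam1} to the twisted setting, combining a Jensen-style lower bound from a mixture of pointed heat kernels with a Perelman-style direct upper bound to sandwich $\N^H_{x_0, t_0}(r^2)$ between $\log|B(x_0, t_0, r)|_\phi - n\log r \pm C$.

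Pick a smooth cutoff $\eta:M\to[0,1]$ with $\eta \equiv 1$ on $B(x_0, t_0, r/2)$, $\mathrm{supp}\,\eta \subset B(x_0, t_0, r)$, and $|\nabla \eta| \leq C/r$, and define $v_0 := \eta^2 e^{-\phi_{t_0}}/Z$ at time $t_0$ with $Z := \int \eta^2 e^{-\phi_{t_0}}\,dg_{t_0} \leq |B(x_0, t_0, r)|_\phi$. Solving $\square^* v = 0$ backward from $v(\cdot, t_0) = v_0$ produces a probability density $v(\cdot, s)$ on $[t_0-r^2, t_0]$, and the reproducing formula $v(y, s) = \int K(x, t_0; y, s)\,v_0(x)\,dg_{t_0}(x)$ expresses $v(\cdot, s)$ as a $\mu$-mixture of pointed heat kernels with $d\mu := v_0\,dg_{t_0}$ supported in $B(x_0, t_0, r)$. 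Concavity of $-t\log t$ together with the linearity $\Psi^v = \int\Psi^x d\mu$, $P^v = \int P^x d\mu$ yields
\[
\N^H[v](r^2) \geq \int_M \N^H_{x, t_0}(r^2)\,d\mu(x),
\]
where $\N^H[v]$ denotes the natural extension of $\N^H$ from the pointed heat kernel to the general $\square^*$-solution $v$. Under $|R^\phi|+|H|^2\leq Cr^{-2}$, the gradient estimates \Cref{heatnash} and \Cref{pf2.19} give $|\nabla \N^H_{\cdot, t_0}(r^2)| \leq C/r$, so the integrand oscillates by at most a dimensional constant across $B(x_0, t_0, r)$ and $\int \N^H_{x, t_0}(r^2)\,d\mu \geq \N^H_{x_0, t_0}(r^2) - C$.

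It remains to establish $\N^H[v](r^2) \leq \log|B(x_0, t_0, r)|_\phi - n\log r + C$. Integrating the identity $(\tau\N^\phi[v])' = \W^\phi[v]$ from $\tau = 0$ (where $\tau\N^\phi[v] \to 0$) to $\tau = r^2$ yields $r^2\N^\phi[v](r^2) = \int_0^{r^2}\W^\phi[v](\tau)\,d\tau$, and each contribution to $\W^\phi[v](\tau) = \tau\int(R^\phi+|\nabla f|^2)\,d\nu + H(v(\cdot, t_0-\tau)) - \int \phi v\,dg - \tfrac n2 \log(4\pi\tau) - n$ can be bounded separately. The initial-data Fisher information $\int|\nabla\log v_0|^2\,v_0\,dg \leq C/r^2$ is finite because the $\eta^{-2}$ singularity in $\nabla \log v_0 = 2\nabla\eta/\eta - \nabla\phi$ is absorbed by the factor $\eta^2$ in $v_0$; $|R^\phi|\leq Cr^{-2}$ gives $\tau\int R^\phi \,d\nu = O(1)$; and the initial Boltzmann entropy satisfies $H(v_0) \leq \log Z + C \leq \log|B(x_0, t_0, r)|_\phi + C$ by direct calculation using $|\eta^2\log\eta| \leq 1/e$ and $\phi \geq 0$. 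The hardest step is propagating the upper bound on $H(v(\cdot, s)) - \int\phi v\,dg_s$ uniformly in $s \in [t_0 - r^2, t_0]$: the evolution $\frac{d}{d\tau}(H - \int\phi v) = \int R^\phi v + \int |\nabla f|^2 v$ has an unsigned curvature contribution of size $O(r^{-2})$ and a nonnegative Fisher-information contribution $\int|\nabla f|^2 v$, which must be controlled using the $\W^H$-monotonicity together with \Cref{w1monotone} and \Cref{Posc}. Converting $\N^\phi[v]$ to $\N^H[v] = \N^\phi[v] - P^v/3$ costs only $P^v(r^2) \leq C$ from $|H|^2 \leq Cr^{-2}$, which combined with the Jensen/oscillation step above gives the theorem.
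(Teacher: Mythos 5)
Your opening moves — the cutoff $v_0 = a^{-1}\eta^2 e^{-\phi}$, the reproducing formula expressing $v$ as a $\mu$-mixture of pointed heat kernels, the Jensen lower bound, and the gradient/oscillation estimate from \Cref{heatnash} to replace $\int \N^H_{x,t_0}\,d\mu$ by $\N^H_{x_0,t_0} - C$ — match the paper's proof closely. Doing Jensen directly on $\N^H$ by noting that $P$ is affine in the density, rather than doing Jensen on $\N^\phi$ and controlling $P$ separately as the paper does, is a harmless variant.

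The gap is in the upper-bound half. You claim $\N^H[v](r^2) \leqslant \log|B(x_0,t_0,r)|_\phi - n\log r + C$ directly, but the method you sketch does not deliver this, and in fact this clean bound is not what the paper proves. The paper's route is to integrate $\frac{d}{d\tau}(\tau\N^\phi) = \W^\phi$ over $\tau\in[1,2]$ (with the cutoff data sitting at $\tau=1$, not $\tau=0$), use the near-monotonicity $\frac{d}{d\tau}\W^\phi \leqslant \frac13\int|H|^2\,d\nu$ to get $\int_1^2\W^\phi\,d\tau \leqslant \W^\phi(1)+C$, and thus reduce everything to the \emph{combination} $\W^\phi[g_0,f_0,1] + \N^\phi[g_0,f_0,1]$ evaluated at the cutoff time, which is explicit in terms of $\eta$, $a$, and $\phi$. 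Crucially, the resulting bound contains the ratio $|B(x_0,t_0,r)|_\phi / |B(x_0,t_0,r/2)|_\phi$, which is not a priori bounded; the paper then closes with a claim-by-contradiction doubling iteration, using that $r^{-n}|B(x_0,0,r)|_\phi \to \omega_n e^{-\phi(x_0,0)}$ as $r\to 0$. You omit this iteration entirely, and without it the argument does not close.

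Your proposed alternative — integrating $\W^\phi$ from $\tau = 0$ and bounding each term of $\W^\phi[v](\tau)$ pointwise — does not work as described. The obstruction you yourself flag, namely controlling $\int f\,d\nu$ (equivalently the evolving Boltzmann entropy of $v$) uniformly in $\tau$, is real, and the suggested fix is circular: the generalized monotonicity gives $\W^H(\tau) \leqslant \W^H(\tau')$ for $\tau > \tau'$, which is useless here since $\W^H(\tau')\to +\infty$ as $\tau'\to 0$ (the $-\frac n2\log(4\pi\tau')$ term diverges). Neither \Cref{w1monotone} nor \Cref{Posc} addresses the Fisher-information term $\tau\int|\nabla f|^2\,d\nu$ for the general $\square^*_\phi$-solution $v$, since the gradient bounds in the paper are proved for the \emph{pointed} heat kernel. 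Also note that $|R^\phi| \leqslant Cr^{-2}$ is assumed only on $B(x_0,t_0,r)\times[t_0-r^2,t_0]$, whereas $\nu$ spreads outside the ball as $\tau$ grows, so $\tau\int R^\phi\,d\nu = O(1)$ is not immediate. The paper's choice to evaluate $\W^\phi + \N^\phi$ only at the cutoff time, where $d\nu_0$ is supported in $B(x_0,t_0,r)$ and everything is explicit, is precisely what avoids these difficulties.
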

	\begin{proof}The proof follows the argument in \cite[Proof of Theorem 6.1]{bam1} with a  modification.
		After parabolic scaling, we can assume $r=1$, $t_0=0$. 
		
		Take $h(y):=\eta(d_0(x_0,y))\in C^\infty_c(B(x_0,0,1))$, where $\eta$ is cut-off function, such that $0\leqs\eta\leqs1$, $\eta|_{[0,\frac{1}{2}]}\equiv 1$, $\eta|_{[1,\infty)}\equiv0$, $0\geqs\eta'\geqs-4$. $a:=\int_M h^2e^{-\phi}\dif g_0$, then $|B(x_0,0,\frac{1}{2})|_\phi\leqs a\leqs |B(x_0,0,1)|_\phi$.
		
		Take $v\in C^\infty(M\times[-1,0])$, such that $\square^*_\phi v=0$ and $v(\cdot,0)=a\iv h^2$, then $\int ve^{-\phi}\dif g=\int a\iv h^2e^{-\phi}\dif g_0=1$. $v_t=(4\pi\tau)^{-\hn}e^{-f_t}$, where $\tau:=1-t$. $f_{-1}=-\log v_{-1}-\hn\log(8\pi)$, $f_0=-\log (a\iv h^2)-\hn\log(4\pi)$.
		$$\Np[g_{-1},f_{-1},2]=\int f_{-1}v_{-1}e^{-\phi}\dif g_{-1}-\hn=-\int v_{-1}\log v_{-1}e^{-\phi}\dif g_{-1}-C_n.$$
		Note that $v_{-1}(z)e^{-\phi(z,-1)}=\int K(y,0;z,-1)a\iv h^2(y)e^{-\phi(y,0)}\dif g_0(y)$, then by Jensen inequality,
		$$v_{-1}\log v_{-1}(z)\leqs\int e^{\phi(z,-1)}K(y,0;z,-1)\left(\log K(y,0;z,-1)+\phi(z,-1)\right)a\iv h^2(y)e^{-\phi(y,0)}\dif g_0(y).$$
		Note that $\Np_{y,0}(1)=-\int K(y,0;z,-1)\left(\log K(y,0;z,-1)+\phi(z,-1)\right)\dif g_{-1}(z)+C_n$, then
		$$\Np[g_{-1},f_{-1},2]\geqs\int\Np_{y,0}(1)a\iv h^2(y)e^{-\phi(y,0)}\dif g_0(y)-C_n\geqs\Np_{x,0}(1)-C\geqs \Nh_{x,0}(1)-C.$$
		$$
		\begin{aligned}
			\int_0^1\Wp[g_{-s},f_{-s},1+s]\dif s&=\int_0^1\frac{\dif}{\dif s}\left((1+s)\Np[g_{-s},f_{-s},1+s]\right)\dif s\\
			&=2\Np[g_{-1},f_{-1},2]-\Np[g_0,f_0,1]\\
			&\geqs2\Np_{x,0}(1)-\Np[g_0,f_0,1]-C,
		\end{aligned}$$
		$$\int_0^1\Wp[g_{-s},f_{-s},1+s]\dif s\leqs\Wp[g_0,f_0,1]+C,$$
		$$
		\begin{aligned}
			\Wp[g_0,f_0,1]+\Np[g_0,f_0,1]&=\int\left(R^\phi+|\nabla f_0|^2+2f_0\right)\dif\nu_0-C_n\\
			&\leqs a\iv\int(4|\nabla h|^2-2h^2\log h^2)e^{-\phi}\dif g_0+2\log a+C\\
			&\leqs a\iv(100+2e\iv)|B(x_0,0,1)|_\phi+2\log|B(x_0,0,1)|_\phi+C\\
			&\leqs200\frac{|B(x_0,0,1)|_\phi}{|B(x_0,0,\frac{1}{2})|_\phi}+2\log|B(x_0,0,1)|_\phi+C.
		\end{aligned}$$
		Then by parabolic rescale, we have
		$$r^{-n}|B(x_0,0,r)|_\phi\geqs c\exp(\Nh_{x,0}(r^2))\exp\left(-100\frac{|B(x_0,0,r)|_\phi}{|B(x_0,0,\frac{r}{2})|_\phi}\right).$$
		Then there exists $c_0=c_0(c,n)$, we claim that $r^{-n}|B(x_0,0,r)|_\phi\geqs c_0\exp(\Nh_{x,0}(r^2))$. If not, there exists $r_0>0$, $r_0^{-n}|B(x_0,0,r_0)|_\phi< c_0\exp(\Nh_{x,0}(r_0^2))$, then
		\begin{align*}
			\frac{|B(x_0,0,r_0)|_\phi}{|B(x_0,0,r_0/2)|_\phi}>100\iv\log(\frac{c}{c_0})>2^n.
		\end{align*}
		  so $r_0/2$ is also a counterexample for the claim. Repeating this process, we get for any $k$, $r_k=2^{-k}r_0$ violates the claim, however it's impossible if we set $c_0<\omega_n$ since as $r\to 0$, $r^{-n}|B(x_0,0,r)|_\phi\to \omega_ne^{-\phi(x_0,0)}$, $\exp(\Nh_{x,0}(r_0^2))\to\exp(-\phi(x_0,0)-\frac{1}{3}|H|^2(x_0,0))$.
	\end{proof}
	\begin{remark}
		Note that the key point in the proof is the monotonicity of the generalized entropy $\Nh_{x,t}$. For the weighted entropy $\Np_{x,t}$, the last argument of the proof fails.
	\end{remark}
	Bamler introduced the concept of $H_n$-center instead of worldline to represent the same point in different time with error.
	\begin{definition}[\text{\cite[Definition 3.10]{bam1}}]
		A point $(z,t)\in M\times I$ is called an $H_n$-center of a point $(x_0,t_0)\in M\times I$ if $t\leqs t_0$ and $\var_t(\delta_z,\nu_{x_0,t_0;t})\leqs H_n(t_0-t)$,
		where the variance $\var(\mu_1,\mu_2):=\int_{M\times M}d^2(x_1,x_2)\dif\mu_1(x_1)\dif\mu_2(x_2)$.
	\end{definition} 
	In \cite[Proposition 3.12]{bam1}, Bamler shows that along the super Ricci flow, $H_n$-center always exists due to the $H_n$-concentration of such flow. Considering the $H_n$-center instead, we can have a similar result using the weaker entropy $\Np_{x,t}$.
	\begin{theorem}\label{centernoncollapsed}
		Suppose $[t_0-r^2,t_0]\subset I$, $R^\phi(\cdot,t_0-r^2)\geqs R^\phi_{\min}$, $|H|^2\leqs Ar^{-2}$, $(z,t_0-r^2)$ is $H_n$-center of $(x_0,t_0)$, then
		$$|B(z,t_0-r^2,\sqrt{2H_n}r)|_\phi\geqs c\exp(\Np_{x_0,t_0}(r^2))r^n,$$
		where $c=c\exp(-2(n-2R^\phi_{\min}r^2+\frac{2}{3}A)^{\frac{1}{2}})$.
	\end{theorem}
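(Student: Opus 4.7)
The plan is to parabolically rescale to $r=1$, $t_0=0$, and write $B=B(z,-1,\sqrt{2H_n})$, $V=|B|_\phi$, $\nu=\nu_{x_0,0;-1}=K\dif g_{-1}$ with $K=(4\pi)^{-\hn}e^{-f-\phi}$, and $\bar f=\int f\dif\nu=\Npp_{-1}(x_0,0)+\hn$. The $H_n$-center hypothesis $\var_{-1}(\delta_z,\nu)\leqs H_n$ together with Chebyshev's inequality immediately gives $\nu(B)\geqs 1/2$, so at least half of the conjugate heat kernel mass already sits inside $B$.

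The key analytic input is a variance bound on $f$ with respect to $\nu$. The corollary following the entropy monotonicity yields $\int \tau(R^\phi+|\nabla f|^2)\dif\nu\leqs\hn+\Psi_{x_0,0}(\tau)/3$; plugging in $\tau=1$ together with $R^\phi\geqs R^\phi_{\min}$ and $\Psi_{x_0,0}(1)\leqs A$ (since $|H|^2\leqs A$ after scaling) produces $\int|\nabla f|^2\dif\nu\leqs\hn-R^\phi_{\min}+A/3$. The $L^2$-Poincar\'e inequality on the conjugate heat kernel measure along super Ricci flow \cite[Theorem 1.10]{HN14}, which already appeared in the proof of \Cref{heatnash}, then upgrades this to
\begin{equation*}
\sigma^2:=\int(f-\bar f)^2\dif\nu\leqs 2\int|\nabla f|^2\dif\nu\leqs n-2R^\phi_{\min}+\frac{2A}{3}.
\end{equation*}
A Cauchy-Schwarz estimate $\left|\int_B(f-\bar f)\dif\nu\right|\leqs\sigma$ together with $\nu(B)\geqs 1/2$ controls the conditional average: $\left|\nu(B)\iv\int_B f\dif\nu-\bar f\right|\leqs\sigma/\nu(B)\leqs 2\sigma$.

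The last ingredient is the pointwise identity $e^{-\phi}\dif g_{-1}=(4\pi)^{\hn}e^{f}\dif\nu$ coming from the formula for $K$, which rewrites the $\phi$-weighted volume as an exponential moment with respect to $\nu$. Integrating over $B$ and applying Jensen's inequality to $e^x$ with the probability measure $\nu|_B/\nu(B)$ yields
\begin{equation*}
V=(4\pi)^{\hn}\int_B e^f\dif\nu\geqs (4\pi)^{\hn}\nu(B)\exp\!\left(\nu(B)\iv\int_B f\dif\nu\right)\geqs\frac{(4\pi)^{\hn}}{2}\exp(\bar f-2\sigma).
\end{equation*}
Since $\bar f=\Npp_{-1}(x_0,0)+\hn$, absorbing $e^{\hn}/2$ into a dimensional constant and reinstating the scaling factor $r$ recovers the stated bound with $c=c_n\exp\!\left(-2\sqrt{n-2R^\phi_{\min}r^2+2A/3}\right)$. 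The argument is essentially elementary once the identity $e^{-\phi}\dif g_{-1}=(4\pi)^{\hn}e^f\dif\nu$ is noted; the only subtlety is verifying that both the $L^2$-Poincar\'e inequality and the integral bound $\int\tau(R^\phi+|\nabla f|^2)\dif\nu\leqs\hn+\Psi/3$ transfer intact to the generalized Ricci flow setting, which is already established in the weighted formulation preceding \Cref{heatnash}.
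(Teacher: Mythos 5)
Your proposal is correct and follows essentially the same approach as the paper: both use $\nu(B)\geqs 1/2$ from the $H_n$-center, both bound the variance of $f$ via the Nash-entropy/Poincar\'e combination with $\Psi\leqs A$, and both convert the conditional average of $f$ over $B$ into a lower bound on $|B|_\phi$ via a convexity argument. The only cosmetic difference is that you apply Jensen to $e^x$ against the restricted measure $\nu|_B/\nu(B)$, whereas the paper applies Jensen to $x\log x$ against $e^{-\phi}\dif g/|B|_\phi$ with $u=\nu(B)^{-1}(4\pi)^{-n/2}e^{-f}$; unwinding either gives the identical inequality $|B|_\phi\geqs(4\pi)^{n/2}\nu(B)\exp\bigl(\nu(B)^{-1}\int_Bf\,\dif\nu\bigr)$.
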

	\begin{proof}The proof is analogous to the argument in \cite[Proof of Theorem 6.2]{bam1}.
		After parabolic scaling, we can assume $r=t_0=1$. Denote by $\dif\nu_t:=\dif\nu_{x_0,1;t}$, $B:=B(z,0,\sqrt{2H_n})$. By \cite[Proposition 3.13]{bam1}, we have $\nu_0(B)\geqs\frac{1}{2}$.
		$$\int_M\left|f-\hn-\Np_{x_0,1}(1)\right|\dif\nu_0\leqs\left(\int_M\left(f-\hn-\Np_{x_0,1}(1)\right)^2\dif\nu_0\right)^{\frac{1}{2}}\leqs\left(n-2R^\phi_{\min}+\frac{2}{3}\Psi_{x_0,1}(1)\right)^{\frac{1}{2}}.$$
		$$
		\begin{aligned}
			\frac{1}{\nu_0(B)}\int_Bf\dif\nu_0&\geqs\hn+\Np_{x_0,1}(1)-\frac{1}{\nu_0(B)}\int_M\left|f-\hn-\Np_{x_0,1}(1)\right|\dif\nu_0\\
			&\geqs\hn+\Np_{x_0,1}(1)-2\left(n-2R^\phi_{\min}+\frac{2}{3}\Psi_{x_0,1}(1)\right)^{\frac{1}{2}}.
		\end{aligned}$$
		Let $u:=\nu_0(B)\iv(4\pi)^{-\hn}e^{-f}$, then $\int_Bue^{-\phi}\dif g_0=1$.
		$$
		\begin{aligned}
			\int_Bu\log ue^{-\phi}\dif g_0&=-\frac{1}{\nu_0(B)}\int f\dif\nu_0-\hn\log(4\pi)-\log(\nu_0(B))\\
			&\leqs-\Np_{x_0,1}(1)-\hn+2\left(n-2R^\phi_{\min}+\frac{2}{3}\Psi_{x_0,1}(1)\right)^{\frac{1}{2}}+\log2+\hn\log(4\pi).
		\end{aligned}$$
		By Jensen's inequality,
		$$\frac{1}{|B|_\phi}\int_Bu\log ue^{-\phi}\dif g_0\geqs\left(\frac{1}{|B|_\phi}\int_Bue^{-\phi}\dif g_0\right)\log\left(\frac{1}{|B|_\phi}\int_Bue^{-\phi}\dif g_0\right)=-\frac{1}{|B|_\phi}\log|B|_\phi,$$
		$$\log|B|_\phi\geqs\Np_{x_0,1}(1)-2\left(n-2R^\phi_{\min}+\frac{2}{3}\Psi_{x_0,1}(1)\right)^{\frac{1}{2}}-C_n.$$
	\end{proof}
	\section{Heat kernel Estimate}
	\begin{theorem}\label{heatkernel}
		Suppose $[s,t]\subset I$, $R^\phi(\cdot,s)\geqs R^\phi_{\min}$, $|H|^2\leqs A$, then
		$$K(x,t;y,s)\leqs\frac{C}{(t-s)^{\frac{n}{2}}}\exp(-\Npp_s(x,t)),$$
		where $C=C(R^\phi_{\min}(t-s),A(t-s))$.
	\end{theorem}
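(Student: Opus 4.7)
The plan is to adapt the proof of the analogous heat kernel bound for Ricci flow in \cite[Theorem 7.2]{bam1} to the generalized Ricci flow setting, tracking the torsion $H$ and the weighting by $\phi$. By parabolic rescaling I may assume $s = 0$, $t = 1$; the hypotheses become $R^\phi(\cdot,0) \geqs R^\phi_{\min}$ and $|H|^2 \leqs A$ on $[0,1]$, and the goal becomes $K(x,1;y,0) \leqs C \exp(-\Npp_0(x,1))$ with $C = C(n, R^\phi_{\min}, A)$. Writing $f(y,0) := -\log K(x,1;y,0) - \phi(y,0) - \frac{n}{2}\log(4\pi)$ and using $\Npp_0(x,1) = \int f \, \dif\nu_{x,1;0} - n/2$, this bound is equivalent to the pointwise estimate $f(y,0) + \phi(y,0) \geqs \int f \, \dif\nu_{x,1;0} - \frac{n}{2} - \log C'$, which, since $\phi \geqs 0$ (from $\square\phi \geqs 0$ with nonnegative initial data), reduces to a one-sided pointwise control of $f$ by its $\nu$-average up to a constant depending on $n, R^\phi_{\min}, A$.

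The strategy is to split according to the distance of $y$ from an $H_n$-center $(z,0)$ of $(x,1)$, whose existence is guaranteed by \cite[Proposition 3.12]{bam1} since generalized Ricci flow is super Ricci flow. In the near regime $d_0(y,z) \leqs R$ for a fixed large $R$, I would combine the $L^2$-Poincar\'e inequality from \cite[Theorem 1.10]{HN14} with the bound
\begin{equation*}
\int |\nabla f|^2 \, \dif\nu_{x,1;0} \leqs \tfrac{n}{2} - R^\phi_{\min} + \tfrac{1}{3}\Psi_{x,1}(1) \leqs \tfrac{n}{2} - R^\phi_{\min} + \tfrac{A}{3}
\end{equation*}
coming from the corollary following the generalized Perelman entropy monotonicity (together with $\Psi_{x,1}(s) \leqs As$) to obtain $L^2$-control of $f - \bar f$ under $\nu$. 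This $L^2$-control is then upgraded to a pointwise one-sided bound by a dyadic semigroup iteration on
\begin{equation*}
K(x,1;y,0) = \int_M K(x,1;z,r) \, K(z,r;y,0) \, \dif g_r(z),
\end{equation*}
invoking at each scale $r$ the gradient estimate $|\nabla_z \log K(x,1;z,r)| \leqs C(n)(1-r)^{-1/2}$ from \cite[Proposition 4.2]{bam1}. In the far regime $d_0(y,z) > R$, the sub-Gaussian upper bound on $K$ for super Ricci flows (also \cite[Proposition 4.2]{bam1}) gives $K(x,1;y,0) \leqs C e^{-d_0(y,z)^2/C}$; combined with the upper bound $\Npp_0(x,1) \leqs A/3$ (which follows from $\Nh \leqs 0$ by its monotonicity together with $\Nh_{x,1}(0) = -\phi(x,1) \leqs 0$, plus $\Np = \Nh + \tfrac{1}{3}P \leqs A/3$), this makes $K$ much smaller than $e^{-\Npp_0(x,1)}$ once $R$ is chosen large.

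I anticipate the main obstacle to be the bookkeeping of torsion contributions throughout the dyadic iteration. Each use of the Poincar\'e inequality picks up a $\tfrac{1}{3}\Psi_{x,1}(s)$ correction; since $\Psi_{x,1}(s) \leqs As$ is linear, the dyadic sum over scales remains finite and should be absorbable into the constant $C(n, R^\phi_{\min}, A)$, though this requires careful accounting. A secondary technical point is that the relevant measures carry the $e^{-\phi}$ weight, so the Poincar\'e inequality must be applied in its weighted form; this is automatic, as \cite[Theorem 1.10]{HN14} is stated for exactly the probability measures arising from the conjugate heat kernel on super Ricci flows.
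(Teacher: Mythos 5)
Your proposal correctly identifies the key ingredients — parabolic rescaling, $H_n$-centers, the Poincar\'e inequality and entropy monotonicity, and a near/far splitting relative to the $H_n$-center, all of which carry over because generalized Ricci flow is a super Ricci flow — but the central step as you describe it is circular. The pointwise gradient bound $|\nabla_z \log K(x,1;z,r)| \leqs C(n)(1-r)^{-1/2}$ you invoke in the ``dyadic semigroup iteration'' is \emph{not} contained in \cite[Proposition 4.2]{bam1}: that result gives only integral bounds such as $\int |\nabla_x K|^2 / K \, \dif g$. The pointwise log-gradient estimate (Bamler's Theorem 7.5, and the paper's own Theorem 3.2) is a \emph{consequence} of the heat kernel upper bound you are trying to prove, so you cannot feed it in as an input. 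Without it the dyadic iteration has no pointwise gradient control to propagate between scales.

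The paper's actual proof avoids this by a bootstrap on the constant rather than a dyadic iteration. One first observes that a bound $K(x,t;y,0) \leqs Z t^{-n/2}\exp(-\Npp_0(x,t))$ always holds for \emph{some} $Z$ depending on the flow (citing \cite[Theorem 26.25]{CCG10}), and then shows there is $\underline Z = \underline Z(R^\phi_{\min},A)$ so that if $Z \geqs \underline Z$ then the same bound holds with $Z/2$. The improvement step obtains a gradient bound on $u := K(x,\cdot;y,0)$ itself — not on $\log u$ — via the maximum principle applied to $v := (t-\tfrac12)|\nabla u|^2 + u^2$, which satisfies $\square v \leqs 0$ for $t \in [\tfrac12,1]$, together with the reproduction formula and the hypothesis $K \leqs Z t^{-n/2}\exp(-\Npp)$. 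This gives $|\nabla u| \leqs CZ\exp(-\Npp)$, which suffices to compare $u$ at nearby points inside a ball $B$ around the $H_n$-center. Combining with the noncollapsing estimate Theorem~\ref{centernoncollapsed} on $|B|_\phi$ and the concentration of $\nu_{t_1}$ near the $H_n$-center, one bounds $u(x_0,1) \leqs \exp(-\Npp)(CZ\rho + C\rho^{-n} + \tfrac1{10}Z)$, and choosing $\rho$ small and then $Z$ large yields the improvement. That is the missing mechanism in your argument. A secondary issue: your claim $\Npp_0(x,1) \leqs A/3$ uses $\phi \geqs 0$, which is not among the theorem's hypotheses; the paper's proof does not need it.
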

	\begin{proof}The proof follows from the argument in \cite[Proof of Theorem 7.1]{bam1}.
		After parabolic scaling, we can assume $s=0$, $t=1$. By \cite[Theorem 26.25]{CCG10}, we always have $K(x,t;y,0)\leqs Zt^{-\hn}\exp(-\Npp_0(x,t))$ for $t\in (0,1]$, for some large $Z$, however, $Z$ may depends on the flow $g(t)$. We will show that there exists $\underline{Z}=\underline{Z}(R^\phi_{\min}(t-s),A(t-s))$, such that if $Z\geqs\underline{Z}$, the bound will still hold after replacing $Z$ by $\frac{Z}{2}$. Using this argument, we have $K(x,t;y,0)\leqs \underline{Z}t^{-\hn}\exp(-\Npp_0(x,t))$. Note that the bound is parabolic scaling invariant, it suffices to show the bound holds for $t=1$. Denote by $u(x,t):=K(x,t;y,0)$, $\square u=0$.
		
		$$\ddt\int_M ue^{-\phi}\dif g_t=-\int_Mu(\square^*_\phi1)e^{-\phi}\dif g_t=-\int_MR^\phi ue^{-\phi}\dif g_t\leqs-R^\phi_{\min}\int_Mue^{-\phi}\dif g_t.$$
		Then we have $\int_Mue^{-\phi}\dif g_t\leqs C\exp(-\phi(y,0))$. Denote by $v:=(t-\frac12)|\nabla u|^2+u^2$.
		$$\square v\leqs-2(t-\frac12)|\nabla^2u|^2-2|\nabla u|^2.$$
		Then $\square v\leqs0$ for $t\in[\frac12,1]$. We assume $t\geqs\frac12$ from now on, we have
		$$
		\begin{aligned}
			v(x,t)&\leqs\int_MK(x,t;y,\frac12)v(y,\frac12)\dif g_{\frac12}(y)=\int_MK(x,t;y,\frac12)u^2(y,\frac12)\dif g_{\frac12}(y)\\
			&\leqs2^nZ^2\int_MK(x,t;y,\frac12)\exp(-2\Npp_0(y,\frac12))\dif g_{\frac12}(y).
		\end{aligned}
		$$
		Take $(z,\frac12)$ be the $H_n$-center of $(x,t)$, we have $d_{W_1}^{\frac12}(\delta_z,\nu_{x,t}(\frac12))\leqs\sqrt{\frac12H_n}$. We have $-\Npp_0(y,\frac12)\leqs-\Npp_0(z,\frac12)+Cd_{\frac12}(y,z)\leqs-\Npp_0(x,t)+Cd_{\frac12}(y,z)+C$.
		$$
		\begin{aligned}
			v(x,t)&\leqs2^nZ^2\int_MK(x,t;y,\frac12)\exp(-2\Npp_0(y,\frac12))\dif g_{\frac12}(y)\\
			&\leqs CZ^2\exp(-2\Npp_0(x,t))\int_MK(x,t;y,\frac12)\exp(Cd_{\frac12}(y,z))\dif g_{\frac12}(y).
		\end{aligned}$$
		By \cite[Proof of Theorem 7.1, Theorem 3.14]{bam1}, we have
		$$\int_MK(x,t;y,\frac12)\exp(Cd_{\frac12}(y,z))\dif g_{\frac12}(y)\leqs C.$$
		Note that $v(x,t)\geqs(t-\frac12)|\nabla u|^2$, we have for $t\in[\frac34,1]$, $|\nabla u|(x,t)\leqs CZ\exp(-\Npp_0(x,t))$.
		
		Fix $(x_0,1)$, denote by $\dif\nu:=K(x_0,1;\cdot,\cdot)\dif g$, let $\rho\in(0,\frac12)$ to be determined, then $t_1:=1-\rho^2\in [\frac34,1]$. Let $(z_1,t_1)$ be the $H_n$-center of $(x_0,1)$.
		$$-\Npp_0(z_1,t_1)\leqs-\Npp_0(x_0,1)+Cd_{W_1}^{t_1}(\delta_{z_1},\nu_{t_1})\leqs-\Npp_0(x_0,1)+C\sqrt{H_n\rho^2}.$$
		We have for $\rho\leqs\bar{\rho}$, $-\Npp_0(z_1,t_1)\leqs-\Npp_0(x_0,1)+\log2$. Then $-\Npp_0(y,t_1)\leqs-\Npp_0(x_0,1)+\log2+Cd_{t_1}(y,z_1)$. Denote by $B:=B(z_1,t_1,\sqrt{100H_n}\rho)$.
		$$u(x_0,1)=\int_Mu\dif\nu_{t_1}=\int_B+\int_{M\setminus B}u\dif\nu_{t_1}.$$
		By \Cref{centernoncollapsed}, $|B|_{\phi}\geqs c\exp(\Npp_0(x_0,1))\rho^n$. For any $x',x''\in B$, we have $u(x',t_1)\leqs u(x'',t_1)+CZ\exp(-\Npp_0(x,t))\rho$. Integrate by $|B|_\phi\iv e^{-\phi}\dif g_{t_1}$,
		$$
		\begin{aligned}
			u(x',t_1)&\leqs\frac{1}{|B|_\phi}\int_Bue^{-\phi}\dif g_{t_1}+CZ\exp(-\Npp_0(x,t))\rho\\
			&\leqs C\exp(-\Npp_0(x,t))(Z\rho+C\rho^{-n}).
		\end{aligned}$$
		Then for the first term, we have $\int_Bu\dif\nu_{t_1}\leqs C\exp(-\Npp_0(x,t))(Z\rho+C\rho^{-n})$. For the second term, $\nu_{t_1}(M\setminus B)\leqs 100\iv$, $\int_{M\setminus B}u\dif\nu_{t_1}\leqs Zt_1^{-\hn}\int_{M\setminus B}\exp(-\Npp_0(y,t_1))\dif\nu_{t_1}(y)$. For $\rho\leqs\bar{\rho}$, since $-\Npp_0(z_1,t_1)\leqs-\Npp_0(x_0,1)+\log2$, we have
		$$\begin{aligned}
			\int_{M\setminus B}u\dif\nu_{t_1}&\leqs 2Z\int_{M\setminus B}\exp(-\Npp_0(y,t_1))\dif\nu_{t_1}(y)\\
			&\leqs 4Z\exp(-\Npp_0(x_0,1))\int_{M\setminus B}\exp(Cd_{t_1}(y,z_1))\dif\nu_{t_1}(y).
		\end{aligned}$$
		By \cite[(7.18)]{bam1}, we have
		$$\int_{M\setminus B}\exp(Cd_{t_1}(y,z_1))\dif\nu_{t_1}(y)\leqs\nu(M\setminus B)+C\rho\leqs\frac{1}{100}+C\rho.$$
		Then we have $\int_{M\setminus B}u\dif\nu_{t_1}\leqs\exp(-\Npp_0(x,t))(\frac{1}{10}Z+CZ\rho)$.
		$$u(x_0,1)\leqs \exp(-\Npp_0(x,t))(CZ\rho+C\rho^{-n}+\frac{1}{10}Z).$$
		First fix $\rho=\rho_0$ such that $C\rho_0\leqs\frac{1}{10}$, then for $Z\geqs\underline{Z}=5C\rho_0^{-n}$, we have $$u(x_0,1)\leqs\frac12Z\exp(-\Npp_0(x,t)).$$.
	\end{proof}
	\begin{theorem}
		Suppose $[s,t]\subset I$, $R^\phi(\cdot,s)\geqs R^\phi_{\min}$, $|H|^2\leqs A$, then there exist constants $C, C_0$ depending on $R^\phi_{\min}(t-s)$ and $A(t-s)$, such that
		$$\frac{|\nabla_xK|}{K}(x,t;y,s)\leqs\frac{C}{(t-s)^{\frac{1}{2}}}\sqrt{\log\left(\frac{C_0\exp(-\Npp_s(x,t))}{(t-s)^{\frac{n}{2}}K(x,t;y,s)}\right)}.$$
	\end{theorem}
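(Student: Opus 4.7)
The plan is to follow the strategy of \cite[Theorem 7.5]{bam1}, adapted to the generalized Ricci flow. The proof rests on two ingredients: a classical Hamilton-type log-gradient estimate valid along super Ricci flow, and a localization via the $H_n$-center of $(x,t)$ that converts the pointwise upper bound on $K$ from Theorem \ref{heatkernel} into a constant upper bound suitable for the Hamilton argument.

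First I would establish the Hamilton estimate: if $u>0$ solves $\square u=0$ on $[t_1,t]\times M$ with $u\leqs A$ there, then $|\nabla u|^2/u^2\leqs (t-t_1)\iv\log(A/u)$. One applies the maximum principle to $P:=(t-t_1)|\nabla u|^2/u-u\log(A/u)$; using $\square u=0$ a short computation gives $\square P=(t-t_1)\square(|\nabla u|^2/u)$, and a Bochner-plus-AM--GM argument reduces $\square(|\nabla u|^2/u)\leqs 0$ to the super Ricci flow inequality $\square|\nabla u|^2\leqs -2|\nabla^2 u|^2$. The latter holds for generalized Ricci flow since $\pdts g+2\ric=\tfrac12H^2\geqs 0$ as a symmetric tensor. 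I would then take $u(\cdot,\cdot)=K(\cdot,\cdot;y,s)$ and $t_1:=(s+t)/2$, so $(t-t_1)\iv=2(t-s)\iv$.

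Next, to choose a workable $A$, I would localize via the $H_n$-center: let $(z,t_1)$ be an $H_n$-center of $(x,t)$, so $d_{W_1}^{t_1}(\delta_z,\nu_{x,t;t_1})\leqs\sqrt{H_n(t-t_1)}$. \Cref{nashosc} gives $\Npp_s(z,t_1)\geqs\Npp_s(x,t)-C$, and the spatial Lipschitz bound $|\nabla\Npp_s|\leqs C/\sqrt{t_1-s}$ from \Cref{heatnash} extends this to $\Npp_s(\cdot,t_1)\geqs\Npp_s(x,t)-C(1+R)$ on $B:=B(z,t_1,R\sqrt{t-s})$, whence by \Cref{heatkernel} one has $K(\cdot,t_1;y,s)\leqs\bar A:=Ce^{CR}\exp(-\Npp_s(x,t))/(t-s)^{\hn}$ on $B$. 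Combining a cutoff version of the Hamilton argument on $B\times[t_1,t]$ with the concentration of the conjugate measure $\nu_{x,t;t_1}$ on $M\setminus B$ (via \cite[Proposition 3.13, (7.18)]{bam1}), and fixing $R$ as an absolute constant, the cutoff error of order $1/(t-s)$ is absorbed into the main term $C(t-s)\iv\log(\bar A/K(x,t;y,s))$ after enlarging $C_0$ so that this logarithm is at least $1$. Substituting $\bar A$ and simplifying yields the stated inequality, with constants depending only on $R^\phi_{\min}(t-s)$ and $A(t-s)$.

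The main obstacle is precisely the mismatch between the pointwise upper bound from \Cref{heatkernel} (which varies through $\Npp_s(x,t)$) and the constant upper bound $A$ required by Hamilton's estimate. The $H_n$-center localization together with the Lipschitz control of $\Npp_s$ is the key technical tool that resolves this; in particular, one must also verify that the spacetime oscillation of $\Npp_s$ on $B\times[t_1,t]$ stays controlled (using $-n/(2(t-s))\leqs\square\Npp_s\leqs 0$), and that the tail $M\setminus B$ contributes negligibly despite $K$ potentially being large there. A clean alternative, which I would also explore, is to incorporate the varying pointwise upper bound directly into the auxiliary function $P$ and absorb the resulting $\nabla\Npp_s$ cross-terms using the quadratic gradient bound $|\nabla\Npp_s|^2\leqs C/(t-s)$, bypassing the cutoff entirely.
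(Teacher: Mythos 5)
Your proposal is correct and takes essentially the same route as the paper. The paper's own proof is a one-line reduction to Bamler's proof of Theorem 7.5, feeding in \Cref{heatkernel} and \Cref{nashosc} and noting the rest of the argument is valid for super Ricci flow; you have simply unpacked what that reduction entails — the Hamilton log-gradient estimate (valid since $\pdts g+2\ric=\tfrac12H^2\geqs0$), the $H_n$-center localization, the Nash entropy oscillation/Lipschitz control from \Cref{heatnash} and \Cref{nashosc}, and the pointwise heat-kernel upper bound — which is precisely the content of Bamler's argument that the paper implicitly invokes.
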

	\begin{proof}
		By \cite[Proof of Theorem 7.5]{bam1}, \Cref{heatkernel} and \Cref{nashosc} imply this theorem, since other argument holds for super Ricci flow.
	\end{proof}
	\section{$\varepsilon$-Regularity}
	We need some notions to state the theorem.
	\begin{definition}
		\emph{Backward parabolic curvature radius} is defined by
		$$r_{\Rm}(x,t):=\sup\{r>0:|\Rm|\leqs r^{-2} \text{on\ } B(x,t,r)\times [t-r^2,t]\}.$$
	\end{definition}
	Bamler introduce a new notion of parabolic neighborhood instead of conventional neighborhood in \cite{bam1}, which is an important concept in \cite{bam1,bam2,bam3}.
	\begin{definition}[\cite{bam1}]
		Suppose that $(x_0,t_0)\in M\times I$, $r>0$ and $t_0-r^2\in I$. The \emph{backward $P^*$-parabolic $r$-balls} is defined by 
		$$P^*_-(x_0,t_0,r):=\{(x,t)\in M\times [t_0-r^2,t_0]:d_{W_1}^{t_0-r^2}(\nu_{x_0,t_0;t_0-r^2},\nu_{x,t;t_0-r^2})<r\}.$$
	\end{definition}
	\begin{theorem}\label{reg}
		Suppose $[t-r^2,t]\subset I$, $r^2|H|^2+r^4|\nabla H|^2\leqs C$, $R^\phi\geqs -Cr^{-2}$, there exists $\varepsilon(n,C)>0$, if $\Nh_{x,t}(r^2)\geqs-\varepsilon$, then $r_{\Rm}(x,t)\geqs \varepsilon r$.
	\end{theorem}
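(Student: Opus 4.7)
The plan is a contradiction and parabolic blow-up in the style of \cite[Theorem 10.2]{bam1}. Assume the conclusion fails: there exist $\varepsilon_i\to 0$, generalized Ricci flows $(M_i,g_i,H_i,\phi_i)$, scales $r_i>0$, and points $(x_i,t_i)$ satisfying the hypotheses with a fixed constant $C$, yet with $\Nh_{x_i,t_i}(r_i^2)\geqs-\varepsilon_i$ while $r_{\Rm}(x_i,t_i)<\varepsilon_i r_i$. After parabolic rescaling I take $r_i=1$ and $t_i=0$. A Perelman-type point-picking inside $P^*_-(x_i,0,\tfrac12)$ then produces $(x_i',t_i')$ with $\rho_i:=r_{\Rm}(x_i',t_i')\to 0$ and $r_{\Rm}\geqs\tfrac12\rho_i$ on some parabolic neighborhood $P^*_-(x_i',t_i',A_i\rho_i)$ with $A_i\to\infty$; \Cref{w1monotone} ensures the $P^*_-$-balls behave well in this selection.

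Next I parabolically rescale by $\rho_i^{-2}$. On the rescaled flows $r_{\Rm}(\tilde x_i',\tilde t_i')=1$ with uniform curvature bounds on parabolic neighborhoods of diverging radii, while the hypotheses $r^2|H|^2+r^4|\nabla H|^2\leqs C$ and $R^\phi\geqs -Cr^{-2}$ translate into $|H|^2\leqs C\rho_i^2\to 0$, $|\nabla H|^2\leqs C\rho_i^4\to 0$, and $R^\phi\geqs -C\rho_i^2\to 0$ on the region of interest. Consequently $\Psi,P^*\to 0$ on bounded scales, so on the rescaled flows $\Nh$ and $\Np$ agree in the limit, and after normalizing the dilaton at the basepoint both coincide with the standard Nash entropy of the limit flow.

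The heart of the argument is the entropy transfer: I need $\tilde\Nh_{\tilde x_i',\tilde t_i'}(s)\to 0$ for each fixed $s>0$. Monotonicity $\ddtt\Nh\leqs 0$ (second part of \Cref{nashosc}, via the corollary following it) gives $\Nh_{x_i,0}(\tau)\geqs -\varepsilon_i$ for all $\tau\leqs 1$. Splitting $\Nh=\Np-\tfrac13 P^*$ and applying the oscillation estimates \Cref{nashosc} (for the weighted Nash part) and \Cref{Posc} (for the $P^*$-part), together with the $W_1$-control implicit in the definition of $P^*_-$, yields $\Nh_{x_i',t_i'}(\tau_i)\geqs -\varepsilon_i-o(1)$ at a suitable $\tau_i$ of order $1$. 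Rescaling then gives the desired bound at $(\tilde x_i',\tilde t_i')$ on a definite scale, and monotonicity extends it to all larger scales; combined with the universal upper bound $\N\leqs 0$ on Ricci flows, this pins the limit Nash entropy to $0$.

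Finally, the weighted noncollapsing from \Cref{noncollaped} together with the curvature bound from the point-picking yields a smooth Cheeger--Gromov subsequential limit. Since $H_i\to 0$ in $C^1$ on compact regions, the limit $(M_\infty,g_\infty,(x_\infty,0))$ is a pointed, noncollapsed Ricci flow with bounded curvature, $r_{\Rm}(x_\infty,0)=1$, and standard Nash entropy identically $0$. The equality case of the Nash entropy monotonicity (Bamler's rigidity) then forces the limit to be Euclidean $\rean^n$, contradicting $r_{\Rm}(x_\infty,0)=1$. The main obstacle is the entropy transfer: one must carefully track the scaling of the $\Psi$ and $P^*$ corrections under the blow-up and check that the oscillation bounds in \Cref{nashosc} and \Cref{Posc} combine to yield an $o(1)$ error at the new base point at a definite scale. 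A secondary technical point is upgrading the compactness to smooth Cheeger--Gromov convergence in the region of bounded curvature, which however follows from the curvature and noncollapsing bounds already established.
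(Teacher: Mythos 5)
Your proposal follows the paper's blow-up-by-contradiction strategy in broad outline, but the two steps you flag as "the heart" and "a secondary point" are exactly where the gaps lie.

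\textbf{Point-picking radius.} You pick inside $P^*_-(x_i,0,\tfrac12)$, a fixed radius. But \Cref{nashosc} and \Cref{Posc} bound the oscillation of $\Npp$ and $P^*$ \emph{linearly} in the $W_1$-distance of the base measures, so with a fixed radius you only get a \emph{constant} correction $\Nh_{x_i',t_i'}\geqs\Nh_{x_i,1}(1)-C$, not an $o(1)$ one; the limit entropy would not be pinned to $0$. The paper instead chooses $A_i\to\infty$ with $A_ir_i\to 0$ and picks $(x_i',t_i')\in P^*_-(x_i,1,10A_ir_i)$, so the $W_1$-control is $\leqs 10A_ir_i\to 0$ and the oscillation corrections from \Cref{nashosc} and \Cref{Posc} are $O(A_ir_i)$ and $O((A_ir_i)^2\log(A_ir_i))$ respectively, both $\to 0$. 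Your proposal as written does not establish $\Nh_{x_i',t_i'}\to 0$.

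\textbf{Dilaton.} "Normalizing the dilaton at the basepoint" does not work as a free choice: the hypothesis $\Nh_{x,t}(r^2)\geqs-\varepsilon$ and the quantity $\Np$ both depend on the chosen $\phi$, and shifting $\phi$ by a constant shifts $\Np$ by the same constant, so the hypothesis is not renormalization-invariant. What the paper actually does is observe that $\Np_{x_i',t_i'}(0)=-\phi_i(x_i',t_i')$, so the lower entropy bound plus $\phi\geqs 0$ forces $\phi_i(x_i',t_i')\to 0$; then $\frac{\dif}{\dif\tau}\int\phi_i\dif\nu=-\tfrac16\int|H_i|^2\dif\nu\leqs 0$ gives $\int\tilde\phi_i\dif\tilde\nu_i\to 0$ uniformly on $[0,t_i']$, and together with $\square\tilde\phi_i=\tfrac16|\tilde H_i|^2\to 0$ and $|\nabla\tilde H_i|\to 0$ one concludes $\tilde\phi_i\to\phi_\infty\equiv 0$. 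This is a real argument, not a normalization, and without it your limit $\Np$ does not reduce to the standard Nash entropy. The final rigidity step (flat gradient shrinking soliton with bounded curvature) is essentially the same as the paper's, which extracts the soliton equation directly from $\Wh$-monotonicity rather than quoting Nash-entropy rigidity, but those routes are equivalent.
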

	\begin{proof}The proof follows from the argument in \cite[Proof of Theorem 10.2]{bam1}.
		After a parabolic scaling, we can assume $t=r=1$. Assume there exists a sequence of counterexamples $(M_i,(g_{i,t})_{t\in[0,1]},(H_{i,t}),(\phi_{i,t}),(x_i,1))$, $r_i:=r_{\Rm}(x_i,1)<\varepsilon_i\to0$, $\Nh_{x_i,1}(1)\geqs-\varepsilon_i$.
		
		 Choose a sequence $A_i\to\infty$ with $10A_ir_i\to 0$. By a point-picking argument in \cite[Claim 10.5]{bam1}, there exists $(x_i',t_i')\in P^*_-(x_i,1,10A_ir_i)$ satisfies $r_i':=r_{\Rm}(x_i',t_i')\leqs r_i\to 0$ and $r_{\Rm}\geqs\frac{1}{10}r_i'$ on $P^*_-(x_i',t_i',A_ir_i')$.
		
		By $R^\phi\geqs-C$, Corollary \ref{nashosc} and Proposition \ref{Posc}, we have
		\begin{align*}
			\Npp_0(x_i',t_i')&\geqs\Npp_0(x_i,1)-CA_ir_i-\hn\log\left(\frac{t_i'}{1-(10A_ir_i)^2}\right),\\
			P^*_0(x_i',t_i')&\leqs P^*_0(x_1,1)-C(A_ir_i)^2\log(10A_ir_i).
		\end{align*}
		Then $\Nh_{x_i,1}(1)\to 0$, $A_ir_i\to 0$ implies $\Nh_{x_i',t_i'}(t_i')\to 0$. Note that $\int\phi_i(y,t_i')\dif\nu(y)=-\Np_{x_i',t_i'}(0)\to0$, $\frac{\dif}{\dif\tau}\int\phi_i\dif\nu=-\frac{1}{6}\int|H_i|^2\dif\nu\leqs0$. Then for $t\in[0,t_i']$, $\int\phi_i(t)\dif\nu$ uniformly converges to $0$.
		
		By \cite[Corollary 9.6]{bam1}, we can take a sequence $A_i'\leqs A_i$, $A_i'\to\infty$ such that $P_-(x_i',t_i',A_i'r_i')\subset P^*_-(x_i',t_i',A_ir_i')$, then $r_{\Rm}\geqs \frac{1}{10}r_i'$ on $P_-(x_i',t_i',A_i'r_i')$. Define $\tilde{g}_i(t):=r_i'^{-2}g_i(r_i'^2t+t_i'),$ $\tilde{H}_i(t):=r_i'^{-2}H_i(r_i'^2t+t_i')$, we obtain a sequence of flows $(\tilde{M}_i$,$(\tilde{g}_{i,t})_{t\in[-(A_i')^2,0]}$, $(\tilde{H}_{i,t}),(\tilde{\phi}_{i,t})))$ satisfies that $r_{\Rm}\geqs \frac{1}{10}$ on $P_-(x_i',0,A_i')$, $r_{\Rm}(x_i',0)=1$, $|\tilde{H}_{i,t}|^2\leqs Cr_i'^2\to 0$, and for any fixed $T>0$, $\tilde{\N}^{H}_{x_i',0}(T)=\Nh_{x_i',t_i'}(r_i'^2T)\to0$, $\int\tilde{\phi}_i(T)\dif\tilde{\nu}_i\to0$. 
		
		Then the injectivity radius at $(x_i',0)$ is uniformly bounded from below by Theorem \ref{noncollaped}. Since $|\Rm|$, $|H|$, $|\nabla H|$ are all uniformly bounded, after passing to a subsequence, these flows converge to a smooth and complete pointed ancient flow $(M_{\infty},(g_{\infty,t})_{t\leqs0},x_{\infty},H_{\infty}=0)$. Since $|\tilde{H}_i|^2\to0$, the limit flow is actually Ricci flow. By the upper bound of heat kernel and its gradient, $\tilde{K}_i(x_i',0;\cdot,\cdot)$ converge locally uniformly to a positive solution $v_\infty\in C^{\infty}(M_\infty\times\rean_-)$ of conjugate heat equation. Since $\int\tilde{\phi}_i(T)\dif\tilde{\nu}_i\to 0$, $\square\tilde{\phi}_i=\frac{1}{6}|\tilde{H}_i|^2\to0$,   $|\nabla \tilde{H}_i|^2\to 0$, we have $\tilde{\phi}_i\to \phi_{\infty}\equiv0$.
		
		Since for any fixed $T>0$, $\tilde{\N}^{H}_{x_i',0}(T)\to0$, then $\tilde{\W}^{H}_{x_i',0}(T)\geqs\int_T^{T+1}\tilde{\W}^{H}_{x_i',0}(s)\dif s=(T+1)\tilde{\N}^{H}_{x_i',0}(T+1)-T\tilde{\N}^{H}_{x_i',0}(T)\to 0$, which implies
		$$\int_{-T}^0\int_M\left(-2\tau\left|\ric^{\tilde{f}_i+\tilde{\phi}_i}-\frac{1}{2\tau}\tilde{g}_i\right|^2-\frac{1}{3\tau}\tilde{\Psi}_{x_i',0}\right)\dif\nu\dif t\to 0.$$
		Then $\ric^{f_\infty+\phi_{\infty}}-\frac{1}{2\tau}g_\infty=0$. $\phi_\infty=0$ implies $\ric+\nabla^2f_\infty-\frac{1}{2\tau}g_\infty=0$, thus $g_{\infty}$ is a gradient shrinking soliton. Since $|\Rm|$ is uniformly bounded, $g_\infty$ must be flat, which contradicts with $r_{\Rm}(x_\infty,0)=1<\infty$.
	\end{proof}
	\section{Some Estimates on $H$}
	In the previous sections, a bound on $|H|$ is always needed. Such bounds may not always hold without assumptions, so we will derive some estimates under centain curvature conditions.
	\subsection{Integral bounds on $H$}
	
	By the monotonicity of the weighted scalar curvature, (\ref{HL2phi}) gives the first integral bounds depends on the scalar curvature $R$ and the volume $\vol(g_t)$:
	\begin{equation*}
		\int_M|H|^2\dif g_t\leqs 12\int_M\left(R-R^\phi_0\right)\dif g_t.
	\end{equation*}
	\begin{proposition} We have
		\begin{equation*}
			\vol(g_t)\leqs e^{-3R^\phi_0t}\left(\vol(g_0)+\int_0^t\int_M2e^{3R^\phi_0s}R\dif g_s\dif s\right).
		\end{equation*}
		In particular, if $\int_M R\dif g_t\leqs C_0$, then we have
		\begin{itemize}
			\item If $R^\phi_0>0$, then $\vol(g_t)\leqs C$, $\int_M|H|^2\dif g_t\leqs C$.\\
			\item If $R^\phi_0=0$, then $\vol(g_t)\leqs C(t+1)$, $\int_M|H|^2\dif g_t\leqs C$.\\
			\item If $R^\phi_0<0$, then $\vol(g_t)\leqs Ce^{-3R^\phi_0t}$, $\int_M|H|^2\dif g_t\leqs Ce^{-3R^\phi_0t}$.
		\end{itemize}
		In particular, if $R\leqs C_0$, then we have
		\begin{equation*}
			\vol(g_t)\leqs Ce^{\left(2C_0-3R^\phi_0\right)t},\ \ \int_M|H|^2\dif g_t\leqs Ce^{Ct}.
		\end{equation*}
	\end{proposition}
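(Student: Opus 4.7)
The plan is to reduce the entire statement to a single first-order linear ODE inequality for $V(t):=\vol(g_t)$ and then invoke a Gr\"onwall / integrating-factor argument. First I would compute the evolution of the volume form from $\partial_t g = -2\ric + \tfrac{1}{2}H^2$: since $\tr_g H^2 = |H|^2$ in the convention being used, one gets $V'(t) = \int_M\bigl(-R + \tfrac{1}{4}|H|^2\bigr)\dif g_t$. Plugging in the already-established integral bound (\ref{HL2phi}), i.e.\ $\int_M|H|^2\dif g_t\leqs 12\int_M(R-R^\phi_0)\dif g_t$, collapses this to the scalar inequality
$$V'(t)\leqs 2\int_M R\,\dif g_t - 3R^\phi_0\, V(t).$$

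Multiplying by the integrating factor $e^{3R^\phi_0 t}$ and integrating on $[0,t]$ immediately produces the asserted master bound
$$V(t)\leqs e^{-3R^\phi_0 t}\left(V(0)+\int_0^t 2e^{3R^\phi_0 s}\int_M R\,\dif g_s\,\dif s\right).$$
Substituting this back into (\ref{HL2phi}) also gives $\int_M|H|^2\dif g_t \leqs 12\int_M R\,\dif g_t - 12R^\phi_0\, V(t)$, so the three sign-cases then become elementary estimates of these two right-hand sides.

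Specifically, under $\int_M R\,\dif g_t\leqs C_0$: if $R^\phi_0>0$, the interior integral is at most $\tfrac{2C_0}{3R^\phi_0}(e^{3R^\phi_0 t}-1)$, so the $e^{-3R^\phi_0 t}$ prefactor absorbs it and $V$ stays bounded, while $-12R^\phi_0 V\leqs 0$ also keeps $\int_M|H|^2\dif g_t\leqs 12C_0$; if $R^\phi_0=0$ the exponentials trivialize, giving $V\leqs V(0)+2C_0 t$ and $\int_M|H|^2\dif g_t\leqs 12C_0$; if $R^\phi_0<0$ the integrand $e^{3R^\phi_0 s}$ is uniformly bounded, so the bracket is bounded and only the $e^{-3R^\phi_0 t}$ prefactor contributes, and for $|H|^2$ the $-12R^\phi_0 V$ term dominates to give the same exponential rate. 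Under the pointwise hypothesis $R\leqs C_0$, one has $\int_M R\,\dif g_t\leqs C_0 V$, so the inequality becomes $V'\leqs(2C_0-3R^\phi_0)V$; Gr\"onwall then yields $V(t)\leqs V(0)e^{(2C_0-3R^\phi_0)t}$, and the $|H|^2$ bound is subsumed into $Ce^{Ct}$. There is no real obstacle here; the one mildly delicate point is tracking the numerical factor $3 = (1/4)/(1/12)$ that relates the $\tfrac14|H|^2$ contribution to the volume-form evolution with the $\tfrac1{12}|H|^2$ weight in (\ref{HL2phi}), which is the origin of the $3R^\phi_0$ appearing in the exponential.
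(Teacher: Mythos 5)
Your proof is correct and follows essentially the same route as the paper: compute $\ddt\vol(g_t)=\int_M(\tfrac14|H|^2-R)\dif g_t$, absorb the torsion term using (\ref{HL2phi}) to get $\ddt\vol(g_t)\leqs 2\int_M R\dif g_t-3R^\phi_0\vol(g_t)$, and close with an integrating factor / Gr\"onwall argument plus case-by-case elementary estimates. No differences worth noting.
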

	\begin{proof}
		\begin{equation*}
			\ddt\vol(g_t)=\int_M\left(\frac14|H|^2-R\right)\dif g_t\leqs2\int_M R\dif g_t-3R^\phi_0\vol(g_t).
		\end{equation*}
		Then the results follow from the Gronwall’s inequality.
	\end{proof}
	\begin{proposition}
		\begin{equation*}
			\ddt\int_M|H|^2\dif g_t=\int_M\left(-2|d^*H|^2-\frac32|H^2|^2+\frac14|H|^4+6\left<\ric,H^2\right>-R|H|^2\right)\dif g_t.
		\end{equation*}
	\end{proposition}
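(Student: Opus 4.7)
The identity is a pure evolution-equation computation, so the plan is to write
\[
\ddt\int_M|H|^2\dif g_t \;=\; \int_M\pdt|H|^2\,\dif g_t \;+\; \int_M|H|^2\,\pdt(\dif g_t),
\]
and handle the two summands separately, invoking the generalized Ricci flow equations $\pdt g=-2\ric+\frac12 H^2$ and $\pdt H=-dd^*_gH$.

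First I would dispose of the volume factor. Since $\pdt(\dif g_t)=\tfrac12\tr_g(\pdt g)\dif g_t$, and since $\tr_g H^2=|H|^2$ with the author's normalization of $H^2(X,Y)=\langle i_XH,i_YH\rangle$, this gives
\[
\pdt(\dif g_t)=\left(-R+\tfrac14|H|^2\right)\dif g_t.
\]
Multiplying by $|H|^2$ and integrating yields precisely the $-R|H|^2+\tfrac14|H|^4$ contribution appearing in the stated formula.

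Next I tackle $\pdt|H|^2$. Writing $|H|^2=g^{ia}g^{jb}g^{kc}H_{ijk}H_{abc}$ and differentiating in $t$, the three metric factors each produce the same term by the symmetry of the expression under permutations of the pairs $(ia)$, $(jb)$, $(kc)$, so together they contribute $3(\pdt g^{ia})(H^2)_{ia}$. Substituting $\pdt g^{ia}=2\ric^{ia}-\frac12(H^2)^{ia}$, this piece becomes $6\langle\ric,H^2\rangle-\tfrac32|H^2|^2$, giving two of the five terms in the claim. The remaining contribution comes from differentiating the two $H$-factors: this is $2\langle\pdt H,H\rangle=-2\langle dd^*H,H\rangle$, which after integration by parts (using the $L^2$-adjointness of $d$ and $d^*$ on the closed manifold $M$) becomes $-2\int_M|d^*H|^2\dif g_t$.

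Combining the volume-factor piece with the $\pdt|H|^2$ piece yields the claimed identity. The main obstacle, and really the only one, is the index bookkeeping: keeping straight the combinatorial factors coming from the three inverse-metric differentiations and the two $H$-differentiations, together with the factor of $\tfrac12$ in $\pdt g$ and the convention under which $\tr_g H^2=|H|^2$. Once those are fixed, every step is routine.
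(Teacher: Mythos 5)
Your overall strategy matches the paper's unwritten ``direct calculation'' step for step: split $\ddt\int_M|H|^2\dif g_t$ into the volume-form variation, the contribution from $\pdt g^{-1}$ in $|H|^2=g^{ia}g^{jb}g^{kc}H_{ijk}H_{abc}$, and the contribution from $\pdt H$, then integrate the last piece by parts. The volume term $(-R+\tfrac14|H|^2)|H|^2$ and the three-metric-derivative term $6\langle\ric,H^2\rangle-\tfrac32|H^2|^2$ are correct.

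The gap is in the integration by parts. Everywhere else in the paper the pairings are the \emph{unnormalized} tensor contractions: $|H|^2=H_{ijk}H^{ijk}$ and $(H^2)_{ij}=H_{ikl}H_j{}^{kl}$ (this is forced by the computation in \Cref{H2coor}, where $H=\lambda e^1\wedge e^2\wedge e^3$ gives $(H^2)_{11}=2\lambda^2=\tfrac13|H|^2$, so $|H|^2=6\lambda^2$). With the pairing $\langle\alpha,\beta\rangle:=\alpha_{i_1\cdots i_p}\beta^{i_1\cdots i_p}$ the adjointness of $d$ and $d^*$ picks up a combinatorial factor: for a $2$-form $\beta$ and $3$-form $\gamma$, the three equal terms in the alternation of $d\beta$ give $\int_M\langle d\beta,\gamma\rangle\dif g = 3\int_M\langle\beta,d^*\gamma\rangle\dif g$. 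Consequently
\begin{equation*}
	\int_M 2\langle\pdt H,H\rangle\dif g_t = -2\int_M\langle dd^*H,H\rangle\dif g_t = -6\int_M(d^*H)_{jk}(d^*H)^{jk}\dif g_t,
\end{equation*}
not $-2\int|d^*H|^2$. Your step silently switches to the normalized form inner product $\tfrac{1}{p!}\alpha\cdot\beta$ (under which $d$ and $d^*$ are adjoint with coefficient $1$), which is inconsistent with the tensor contractions used for $|H|^2$, $|H^2|^2$ and $\langle\ric,H^2\rangle$ in the same line. You can detect the mismatch concretely by reducing to $n=3$: from $\pdt h=\Delta h+Rh-\tfrac32 h^3$ and $|H|^2=6h^2$ one gets $\ddt\int|H|^2\dif g=\int(-12|\nabla h|^2+6Rh^2-9h^4)\dif g$, whereas $(d^*H)_{jk}(d^*H)^{jk}=2|\nabla h|^2$, so the $|d^*H|^2$ coefficient has to be $-6$, not $-2$, for the identity to close. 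So either the proposition's stated coefficient carries a stray normalization, or $|d^*H|^2$ there denotes the form norm; in any case your proof must fix one convention and use it consistently through the integration by parts, rather than invoking ``$L^2$-adjointness'' in a way that quietly changes the inner product.
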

	\begin{proof}
		Direct calculation.
	\end{proof}
	\subsection{Pointwise bounds on $H$}
	In many cases, the integral bound is not enough. In previous sections, we need to estimate the term 
	\begin{equation*}
		\Psi_{x,t}(\tau)=\int_{t-\tau}^t\int_M|H|^2\dif\nu_{x,t;s}\dif s.
	\end{equation*}
	Note that $\Psi_{x,t}'(0)=\int_M|H|^2\dif\nu_{x,t;t}=|H|^2(x,t)$, then $\Psi_{x,t}(\tau)=|H|^2(x,t)\tau+O(\tau^2)$. Thus to estimate the term $\Psi_{x,t}$, the pointwise bound on $H$ is necessary.
	\begin{proposition}
		\begin{equation}\label{evolveH}
			\square|H|^2=-2|\nabla H|^2-\frac32|H^2|^2-6R_H\leqs-2|\nabla H|^2-\frac{3}{2n}|H|^4+C_n|\Rm||H|^2,
		\end{equation}
		where $R_H:=R_{ijkl}H^{ijr}H^{kls}g_{rs}$.
	\end{proposition}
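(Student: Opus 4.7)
The plan is to compute $\square|H|^2=\pdt|H|^2-\Delta|H|^2$ directly via the Bochner and Weitzenb\"ock identities, then estimate the resulting curvature terms.

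First I compute $\pdt|H|^2$. Differentiating the pointwise norm $|H|^2=\tfrac{1}{6}H_{ijk}H^{ijk}$ in time, the three metric contractions each contribute via $\pdt g^{ij}=2R^{ij}-\tfrac{1}{2}H^{2,ij}$; using the identity $H_{ajk}H_b{}^{jk}=2H^2_{ab}$, these combine to $2\langle\ric,H^2\rangle-\tfrac{1}{2}|H^2|^2$. The evolution $\pdt H=-dd^*H$ contributes the additional piece $-2\langle dd^*H,H\rangle$, and since $dH=0$ forces $d^*dH=0$, this equals $-2\langle\Delta_HH,H\rangle$, where $\Delta_H$ is the Hodge Laplacian on $3$-forms. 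The standard Bochner identity yields $\Delta|H|^2=2|\nabla H|^2+2\langle\Delta H,H\rangle$, with $\Delta$ the connection Laplacian.

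Combining these and applying the Weitzenb\"ock formula $\Delta_H+\Delta=\mathcal{R}$, where $\mathcal{R}$ is the pure curvature operator on $3$-forms with a Ricci-trace and a Riemann-contraction part, gives
\begin{equation*}
\square|H|^2=-2|\nabla H|^2+2\langle\ric,H^2\rangle-\tfrac{1}{2}|H^2|^2-2\langle\mathcal{R}H,H\rangle.
\end{equation*}
A direct index calculation, using the total antisymmetry of $H$ together with the first Bianchi identity, shows that the Ricci-trace contribution to $\langle\mathcal{R}H,H\rangle$ equals exactly $\langle\ric,H^2\rangle$, so these Ricci contributions cancel. The Riemann part of $-2\langle\mathcal{R}H,H\rangle$, after reorganization via Bianchi, contributes the $-6R_H$ term together with an additional $-|H^2|^2$ piece, which combined with the surviving $-\tfrac{1}{2}|H^2|^2$ assembles into $-\tfrac{3}{2}|H^2|^2-6R_H$.

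For the inequality, Cauchy--Schwarz on the symmetric $(0,2)$-tensor $H^2$ gives $|H^2|^2\geqs(\tr_g H^2)^2/n$; with the paper's normalization $\tr_g H^2$ is a positive multiple of $|H|^2$, yielding the claimed $-\tfrac{3}{2n}|H|^4$ term. And $|R_H|\leqs C_n|\Rm||H|^2$ follows from another Cauchy--Schwarz on the Riemann--$H$ contraction. The main obstacle is the bookkeeping in the Weitzenb\"ock step: pinning down the exact coefficients of the curvature operator on $3$-forms, verifying the Ricci cancellation, and checking that the Riemann piece condenses to precisely $-6R_H-|H^2|^2$ after first-Bianchi reorganization. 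Each manipulation is routine, but the signs, combinatorial factors, and interplay of Riemann symmetries with the antisymmetry of $H$ must be tracked carefully.
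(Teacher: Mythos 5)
Your strategy is the same as the paper's: compute $\pdt|H|^2$ from $\pdt g$ and $\pdt H$, apply the Weitzenb\"ock formula to convert the Hodge Laplacian into the connection Laplacian plus a curvature term, and then estimate by Cauchy--Schwarz. The Cauchy--Schwarz inequalities at the end are correct. However, your intermediate bookkeeping is off in ways that would prevent you from actually landing on the stated coefficients, and the decomposition you attribute to the Weitzenb\"ock curvature term is incorrect.

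First, the paper's conventions (which the coefficients $-\tfrac32$ and $-6$ in the proposition are keyed to, as one can check against the $n=4$ computations in Section 5) are $|H|^2 = H_{ijk}H^{ijk}$ with no $\tfrac16$, and $(H^2)_{ab} = H_{ajk}H_b{}^{jk}$ with no factor $2$. Using $|H|^2 = \tfrac16 H_{ijk}H^{ijk}$ and $H_{ajk}H_b{}^{jk} = 2(H^2)_{ab}$ as you do is inconsistent with the proposition you are trying to prove; with the paper's normalization the three $\pdt g^{ab}$ contractions give $3\bigl(2R^{ab}-\tfrac12(H^2)^{ab}\bigr)(H^2)_{ab} = 6\langle\ric,H^2\rangle - \tfrac32|H^2|^2$, not $2\langle\ric,H^2\rangle-\tfrac12|H^2|^2$. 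Second, your claim that the Riemann part of $-2\langle\mathcal{R}H,H\rangle$ yields $-6R_H$ together with an extra $-|H^2|^2$ is wrong: a careful index computation using the first Bianchi identity and the total antisymmetry of $H$ gives $\langle\ric(H),H\rangle = 3\langle\ric,H^2\rangle + 3R_H$, so the Weitzenb\"ock term contributes exactly $-6\langle\ric,H^2\rangle - 6R_H$ and nothing else. The $-\tfrac32|H^2|^2$ in the final identity comes entirely from $\pdt g$, not from a piece of the Weitzenb\"ock curvature; the Ricci parts $+6\langle\ric,H^2\rangle$ and $-6\langle\ric,H^2\rangle$ cancel exactly. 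The spurious $-|H^2|^2$ you insert is really compensating for the factor-of-three shortfall introduced by your $\pdt g$ normalization error, so although the final line happens to read correctly, the derivation as written does not establish it. (There is also a sign slip in your statement of Weitzenb\"ock: with $\Delta_H = dd^*+d^*d$ and $\Delta$ the rough Laplacian it reads $\Delta_H - \Delta = \mathcal{R}$, not $\Delta_H + \Delta = \mathcal{R}$.)
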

	\begin{proof}
		\begin{equation*}
			\pdt|H|^2=3\left<2\ric-\frac12H^2,H^2\right>+2\left<\Delta_dH,H\right>,
		\end{equation*}
		where $\Delta_d=-(dd^*+d^*d)$ is Hodge Laplacian. Denote the connection Laplacian by $\Delta$, then by the Weitzenb\"{o}ck formula, we have
		\begin{equation*}
			2\left<\Delta_dH,H\right>=2\left<\Delta H-\ric(H),H\right>=\Delta|H|^2-2|\nabla H|^2-2\left<\ric(H),H\right>,
		\end{equation*}
		where $\ric(H)$ is the Weitzenb\"{o}ck curvature operator on the tensor defined by
		\begin{equation*}
			\ric(H)(X_1,\cdots,X_k):=\sum_{i,j}(R(e_j,X_i)H)(X_1,\cdots,e_j,\cdots,X_k).
		\end{equation*}
		In the normal coordinate,
		\begin{equation*}
			\begin{aligned}
				(\ric(H))_{ijk}=-\left(R_{sisr}H_{rjk}+R_{sijr}H_{srk}+R_{sikr}H_{sjr}+R_{sjir}H_{rsk}+R_{sjsr}H_{irk}+R_{sjkr}H_{isr}\right.\\
				\left.+R_{skir}H_{rjs}+R_{skjr}H_{irs}+R_{sksr}H_{ijr}\right),
			\end{aligned}
		\end{equation*}
		where
		\begin{equation*}
			-R_{sisr}H_{rjk}=R_{ir}H_{rjk},\ -R_{sjsr}H_{irk}=R_{jr}H_{irk},\ -R_{sksr}H_{ijr}=R_{kr}H_{ijr}.
		\end{equation*}
		By Bianchi identity,
		\begin{equation*}
			-R_{sijr}H_{srk}-R_{sjir}H_{rsk}=R_{ijsr}H_{srk}+R_{jsir}H_{srk}-R_{sjir}H_{rsk}=R_{ijsr}H_{srk}.
		\end{equation*}
		Thus
		\begin{equation*}
			(\ric(H))_{ijk}=R_{ir}H_{rjk}+R_{jr}H_{irk}+R_{kr}H_{ijr}+R_{ijsr}H_{srk}+R_{iksr}H_{sjr}+R_{jksr}H_{isr},
		\end{equation*}
		\begin{equation*}
			\left<\ric(H),H\right>=3\left<\ric,H^2\right>+3R_{ijsr}H_{srk}H_{ijk}=3\left<\ric,H^2\right>+3R_H,
		\end{equation*}
		\begin{equation*}
			\begin{aligned}
				\pdt|H|^2&=6\left<\ric,H^2\right>-\frac32|H^2|^2+\Delta|H|^2-2|\nabla H|^2-6\left<\ric,H^2\right>-6R_H\\
				&=\Delta|H|^2-2|\nabla H|^2-\frac32|H^2|^2-6R_H.
			\end{aligned}
		\end{equation*}
		The inequality of (\ref{evolveH}) follows from $|A|^2\geqs\frac{1}{n}(\tr A)^2$ and $|R_H|=|\Rm*H*H|\leqs C_n|\Rm||H|^2.$
	\end{proof}
	\begin{corollary}
		Suppose $|\Rm|\leqs K$, then $|H|^2\leqs \max\left(C_nK,\max_x |H|^2(x,0)\right)$.
	\end{corollary}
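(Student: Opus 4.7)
The plan is to apply a straightforward parabolic maximum principle to the evolution inequality
\begin{equation*}
\square|H|^2\leqs -2|\nabla H|^2-\frac{3}{2n}|H|^4+C_n|\Rm||H|^2
\end{equation*}
that was just established in Proposition 5.6. Substituting the assumed curvature bound $|\Rm|\leqs K$ and dropping the favorable gradient term yields
\begin{equation*}
\square|H|^2\leqs |H|^2\!\left(C_n K-\tfrac{3}{2n}|H|^2\right),
\end{equation*}
so the right-hand side is nonpositive whenever $|H|^2\geqs \tfrac{2n}{3}C_n K$. Setting $\widetilde{C}_n:=\tfrac{2n}{3}C_n$ and writing the corollary's constant as $\widetilde{C}_n$ (i.e.\ renaming $C_n$ to absorb this factor), define the threshold
\begin{equation*}
M_0:=\max\bigl(\widetilde{C}_n K,\ \max_x|H|^2(x,0)\bigr).
\end{equation*}

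Next I would argue by the maximum principle that $|H|^2(x,t)\leqs M_0$ for all $(x,t)$. Assuming $M$ is compact, consider $F(t):=\max_{x\in M}|H|^2(x,t)$, which is Lipschitz in $t$. At any time $t_0>0$ where $F(t_0)>M_0$ and at a spatial maximizer $x_0$, one has $\Delta|H|^2(x_0,t_0)\leqs 0$ and (by Hamilton's trick) $\partial_t F(t_0)\leqs \partial_t|H|^2(x_0,t_0)$, so $\square|H|^2(x_0,t_0)\geqs \partial_t F(t_0)$. Combining with the evolution inequality gives
\begin{equation*}
\partial_t F(t_0)\leqs F(t_0)\!\left(C_n K-\tfrac{3}{2n}F(t_0)\right)<0,
\end{equation*}
since $F(t_0)>\widetilde{C}_n K$. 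Thus $F$ is strictly decreasing at any instant where it exceeds $M_0$, which combined with $F(0)\leqs M_0$ prevents $F$ from ever exceeding $M_0$. In the noncompact case one can insert a cutoff $\eta$ in space and time and repeat the same maximum-principle argument, letting the cutoff exhaust $M$; the favorable $-\frac{3}{2n}|H|^4$ term dominates the cutoff error terms once $|H|^2$ is large, which suffices.

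There is no real obstacle here beyond bookkeeping of the dimensional constant: the factor $\tfrac{2n}{3}$ coming from $|A|^2\geqs \tfrac{1}{n}(\tr A)^2$ and the factor $C_n$ from $|R_H|\leqs C_n|\Rm||H|^2$ get combined into a single $C_n$ in the statement. The only nontrivial input is Proposition 5.6, which has already been proved; everything else is a routine application of Hamilton's maximum principle.
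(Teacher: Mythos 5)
Your argument is exactly the paper's proof: apply the maximum principle (Hamilton's trick for the spatial maximum) to the evolution inequality of Proposition 5.6, which the paper states in one line. The extra bookkeeping of the dimensional constant and the sketch of the noncompact case are fine but don't change the approach.
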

	\begin{proof}
		Apply maximum principle to (\ref{evolveH}).
	\end{proof}
	In general cases, bounds on Riemann curvature tensor seems to be necessary. However, the more important cases of generalized Ricci flow is in the lower dimension. For the dimensional reasons, the equation will be a bit simplified. Since the 3-form in 2-manifolds is trivial, the generalized Ricci flow is exactly the same as the Ricci flow. In dimension $n=3$, the tensor $H$ will be deduced to a scalar function:
	\begin{proposition}[\text{\cite[Proposition 4.38]{jeffGRF}}]
		Suppose $n=3$, $(g_t,H_t)$ solves the generalized Ricci flow, the function $h_t\in C^\infty(M)$ satisfies $H_t=h_t\dif g_t$, then
		\begin{equation*}
			\left\{\begin{aligned}
				\pdt g&=-2\ric+h^2g,\\
				\pdt h&=\Delta h+Rh-\frac32h^3.
			\end{aligned}
			\right.
		\end{equation*}
	\end{proposition}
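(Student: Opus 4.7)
The plan is a direct verification in a local $g_t$-orthonormal frame. First I would observe that in dimension three $\Lambda^3 T^*M$ is a real line bundle and $\dif g_t$ is a nowhere-vanishing section (assuming orientability), so $H_t$ admits a unique smooth representation $H_t = h_t\, \dif g_t$; closedness of $H_t$ is automatic because it is top-degree.

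For the metric equation I would compute $H^2(e_i, e_j) = \langle i_{e_i} H, i_{e_j} H\rangle$ in an orthonormal frame $\{e_1, e_2, e_3\}$. Writing $H_{ijk} = h\,\varepsilon_{ijk}$ and using the form inner product $\langle \alpha, \beta\rangle = \alpha_{i_1\cdots i_p}\beta^{i_1\cdots i_p}$ (the convention consistent with the factors $\tfrac{1}{6}, \tfrac{1}{12}$ appearing elsewhere in the paper), one finds
\begin{equation*}
H^2(e_i, e_j) = H_{iab} H_{jab} = h^2 \varepsilon_{iab}\varepsilon_{jab} = 2h^2 \delta_{ij},
\end{equation*}
so $\tfrac{1}{2} H^2 = h^2 g$, and the first evolution equation follows immediately from $\pdt g = -2\ric + \tfrac{1}{2} H^2$.

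For the $h$-equation I would compute $\pdt H$ in two ways and match coefficients of the volume form. Differentiating $H = h\, \dif g$ and using $\pdt(\dif g) = \tfrac{1}{2} \tr_g(\pdt g)\, \dif g$ together with $\tr_g(-2\ric + h^2 g) = -2R + 3h^2$ gives
\begin{equation*}
\pdt H = \bigl(\pdt h - Rh + \tfrac{3}{2} h^3\bigr) \dif g.
\end{equation*}
On the other hand, writing $\ast$ for the Hodge star of $g_t$, we have $\ast H = h$, and $d^* = -\ast d \ast$ on top-degree forms in dimension three, so $d^* H = -\ast dh$. The standard identity $d \ast df = (\Delta f)\, \dif g$ on functions (readily verified in normal coordinates) then gives
\begin{equation*}
-d d^* H = d \ast dh = (\Delta h)\, \dif g.
\end{equation*}
Equating the two expressions yields $\pdt h = \Delta h + Rh - \tfrac{3}{2} h^3$.

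The argument is entirely algebraic, so the main obstacle is purely bookkeeping: I would have to pin down consistent conventions for the inner product on $p$-forms, for the sign of $d^* = \pm \ast d \ast$ on top-degree forms, and for the sign of the Laplacian, and verify that they all match those used elsewhere in the paper, since it is precisely these conventions that produce the coefficients $h^2$ in the first equation and $-\tfrac{3}{2} h^3$ in the second.
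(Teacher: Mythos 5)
Your proof is correct, and since the paper simply cites \cite[Proposition~4.38]{jeffGRF} without giving an argument, there is no in-paper proof to compare against; your direct verification is the natural one. Both halves check out under the conventions the paper actually uses: the proof of the $n=4$ analogue in the paper explicitly writes $(H^2)_{ij}=\sum_{k,l}H_{ikl}H_{jkl}$, confirming the unnormalized full-summation inner product you inferred, so $H^2 = 2h^2 g$ and hence $\tfrac12 H^2 = h^2 g$; and the corollary following the proposition uses $|H|^2 = 6h^2$, consistent with the same convention. For the $h$-equation, your sign bookkeeping matches the paper's $\Delta_d = -(dd^*+d^*d)$, under which $-dd^*H = \Delta_d H$ (since $H$ is closed) and $d\ast dh = (\Delta h)\,\dif g$, giving $\pdt h = \Delta h + Rh - \tfrac32 h^3$ after expanding $\pdt(\dif g) = \tfrac12(-2R+3h^2)\,\dif g$. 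The only caveat worth making explicit is orientability of $M$, which you do note; without it one can still argue locally since the statement is pointwise.
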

	\begin{corollary}
		Suppose $n=3$, $|R|\leqs C_0$, then $|H|\leqs C$.
	\end{corollary}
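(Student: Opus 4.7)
The plan is to apply the scalar parabolic maximum principle to $h^2$, using the evolution equation from the preceding proposition. From $\pdt h=\Delta h+Rh-\tfrac{3}{2}h^3$ we read off $\square h=Rh-\tfrac{3}{2}h^3$, and a standard computation then gives
\begin{equation*}
\square h^2=2h\,\square h-2|\nabla h|^2=2Rh^2-3h^4-2|\nabla h|^2.
\end{equation*}
Dropping the gradient term and inserting the hypothesis $|R|\leqs C_0$ yields the clean differential inequality $\square h^2\leqs 2C_0 h^2-3h^4$, which is the only input one really needs.

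The second step is to compare $h^2$ against the scalar ODE $\phi'=\phi(2C_0-3\phi)$. This ODE has an unstable equilibrium at $\phi=0$ and a stable one at $\phi=2C_0/3$, so any nonnegative solution satisfies $\phi(t)\leqs\max(\phi(0),\,2C_0/3)$ for all $t\geqs 0$. Explicit integration even yields the universal bound
\begin{equation*}
\phi(t)\leqs\frac{2C_0}{3(1-e^{-2C_0 t})},
\end{equation*}
which is independent of the initial datum $\phi(0)$ for every $t>0$ and tends to $2C_0/3$ as $t\to\infty$.

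The parabolic maximum principle then compares $\sup_x h^2(\cdot,t)$ with the ODE solution started from $\phi(0)=\sup_x h^2(x,0)$, giving $h^2(x,t)\leqs\max(\sup_x h^2(x,0),\,2C_0/3)$ pointwise. Since in dimension three $|H|^2$ is a fixed multiple of $h^2$ (depending only on the convention for the inner product on $3$-forms), the bound $|H|\leqs C$ follows.

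The argument is essentially routine; the only real decision is what the constant $C$ is allowed to depend on. As stated above one gets $C=C(C_0,\sup_x|H|(x,0))$, but the universal ODE estimate $\frac{2C_0}{3(1-e^{-2C_0 t})}$ gives a bound depending only on $C_0$ and a lower bound for $t$, which may be preferable in applications where no initial control on $H$ is available. In either case the argument is several lines of manipulation, with no genuine obstacle beyond checking the maximum principle applies (e.g.\ compactness of $M$, or a standard cutoff argument under bounded geometry in the noncompact case).
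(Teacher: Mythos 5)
Your proof is correct and follows essentially the same route as the paper: both rest on the evolution equation $\square h = Rh - \tfrac32 h^3$ from the preceding proposition and a parabolic maximum principle. The only cosmetic difference is that you apply the maximum principle to $h^2$ via an ODE comparison, which automatically handles the sign of $h$ in one step, whereas the paper treats $\max h$ and $\min h$ separately at the extremum point; your observation about the initial-data-free bound $\tfrac{2C_0}{3(1-e^{-2C_0 t})}$ is a small but genuine refinement over the paper's $C=C(C_0,\max_x|H|(x,0))$.
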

	\begin{proof}
		Assume $h$ attains its maximum at $(x_0,t_0)$, $t_0>0$, $h(x_0,t_0)=\max_{t\in[0,T]}h(x,t)$.
		\begin{equation*}
			0\leqs\square h(x_0,t_0)\leqs Rh(x_0,t_0)-\frac32h^3(x_0,t_0).
		\end{equation*}
		Then $h(x_0,t_0)\leqs \left(\frac23C_0\right)^{\frac12}$. The lower bound is similar. The result follows from $|H|^2=6h^2$.
	\end{proof}
	
	In dimension $n=4$, the bounds on scalar curvature $R$ seems only to give a time-dependent bound on $||H||_{L^2}$. However, by a careful computation, we will show the Ricci curvature bound implies the pointwise bound on $H$.
	
	\begin{proposition}\label{H2coor}
		Suppose $n=4$, for a suitable orthonormal basis $\{e_i\}$ of $T_pM$, we have
		\begin{equation*}
			(H^2)_{ij}(p)=\frac13|H|^2\left(\begin{matrix}
				1 & & & \\
				&1& &  \\
				&	&1&	 \\
				&	& &0
			\end{matrix}\right).
		\end{equation*}
		Thus there exists a vector field $X_H$ as the eigenvector of $0$. In fact, we can take $X_H=\left(\star H\right)^\sharp\in\Gamma^\infty(TM)$.
	\end{proposition}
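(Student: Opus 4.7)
The plan is to exploit the fact that in dimension $n=4$ the Hodge star gives an isomorphism $\star:\Lambda^3 T_p^*M \to \Lambda^1 T_p^*M$, so the 3-form $H(p)$ is encoded in a single covector $\star H(p)$, equivalently (via the metric) in a distinguished vector $X_H(p)=(\star H(p))^\sharp$. This vector will turn out to span the zero-eigenspace of $H^2$, and an orthonormal basis aligned with it will diagonalize $H^2$ in the claimed form.

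First I would dispose of the trivial case $H(p)=0$ and assume $X_H(p)\neq 0$. Then pick an oriented orthonormal basis $\{e_1,e_2,e_3,e_4\}$ of $T_pM$ with $e_4$ parallel to $X_H(p)$. In this basis $\star H$ is a scalar multiple of $e^4$, and applying $\star$ once more (using $\star^2=\pm\mathrm{id}$ on 3-forms in dimension 4) yields $H = h\,e^1\wedge e^2\wedge e^3$ for some $h\in\mathbb{R}$.

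The remaining step is a direct pointwise computation: from the explicit form of $H$ one reads off
\begin{equation*}
	i_{e_1}H = h\,e^2\wedge e^3,\quad i_{e_2}H = -h\,e^1\wedge e^3,\quad i_{e_3}H = h\,e^1\wedge e^2,\quad i_{e_4}H = 0,
\end{equation*}
and these four 2-forms are pairwise orthogonal with the first three of equal norm. Hence $(H^2)_{ij}=\langle i_{e_i}H,i_{e_j}H\rangle$ is diagonal, with top-left $3\times 3$ block a scalar multiple of the identity and the $(4,4)$ entry zero. Matching that scalar against the normalization used for $|H|^2$ in the paper produces the factor $\tfrac{1}{3}|H|^2$. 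Moreover, $X_H$ is proportional to $e_4$ in this basis, so $i_{X_H}H = 0$; this both confirms that $X_H$ spans the zero-eigenspace of $H^2$ and identifies it as $(\star H)^\sharp$. The only real obstacle is bookkeeping — keeping track of Hodge-star signs and normalization conventions so that the $\tfrac{1}{3}$ factor emerges — but beyond that the proposition is pointwise linear algebra that uses nothing specific to the flow.
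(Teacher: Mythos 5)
Your argument is essentially identical to the paper's: both choose an orthonormal frame with $e_4 \parallel (\star H)^\sharp$, observe that this forces $H = \lambda\, e^1\wedge e^2\wedge e^3$, and then read off $(H^2)_{ij}$ directly from this normal form, with the factor $\tfrac13|H|^2$ coming from the normalization $6\lambda^2 = |H|^2$. The only cosmetic difference is that you phrase the final computation in terms of the interior products $i_{e_i}H$ being pairwise orthogonal, while the paper writes out $\sum_{k,l}H_{ikl}H_{jkl}$ in index notation; these are the same calculation.
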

	\begin{proof}
		It's obvious if $H(p)=0$. So we assume $H(p)\neq0$. Take $\left(\star H\right)^\sharp=\lambda e_4$ and extend $e_4$ to an orthonormal basis $\{e_i\}$. Then
		\begin{align*}
			H=-\star(\lambda e^4)=\lambda e^1\wedge e^2\wedge e^3.
		\end{align*}
		So $\lambda^2=\frac16|H|^2$. $(H^2)_{4k}=(H^2)_{k4}=\left<i_{e_4}H,i_{e_k}H\right>=0$ for $k\in\{1,2,3,4\}$. For $i,j\in\{1,2,3\}$
		\begin{align*}
			(H^2)_{ij}=\sum_{k,l\in\{1,2,3\}}H_{ikl}H_{jkl}=\delta_{ij}\sum_{k,l\in\{1,2,3\}}H^2_{ikl}=2\lambda^2\delta_{ij}=\frac{1}{3}|H|^2\delta_{ij}.
		\end{align*}
	\end{proof}
	
	\begin{proposition}
		Suppose $n=4$, we have
		\begin{equation*}
			R_H=\frac13R|H|^2-2\left<\ric,H^2\right>=-\frac13R|H|^2+\frac23\ric(X_H)|H|^2.
		\end{equation*}
		Combine with (\ref{evolveH}), we have
		\begin{equation*}
			\square|H|^2=-2|\nabla H|^2-\frac12|H|^4+2R|H|^2-4\ric(X_H)|H|^2.
		\end{equation*}
		Thus $\ric\geqs-K$, $R\leqs K$ implies $|H|^2\leqs \max(12K,\max_x|H|^2(x,0))$.
	\end{proposition}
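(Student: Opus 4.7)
The plan is to exploit the pointwise algebraic structure of $H^2$ from Proposition \ref{H2coor} to express $R_H$ in terms of the scalar curvature and a single Ricci component, substitute the resulting identity into the evolution formula (\ref{evolveH}), and then close with a parabolic maximum principle.

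I would first work pointwise at an arbitrary $(x,t)$ in the adapted orthonormal frame $\{e_i\}_{i=1}^4$ of Proposition \ref{H2coor}, in which $H = \lambda\, e^1\wedge e^2\wedge e^3$ with $\lambda^2 = |H|^2/6$ and $e_4 = \hat X_H := X_H/|X_H|$. Writing $P := g - \hat X_H^\flat\otimes \hat X_H^\flat$ for the orthogonal projector onto $\mathrm{span}\{e_1,e_2,e_3\}$, Proposition \ref{H2coor} rewrites as $H^2 = \tfrac{|H|^2}{3}P$. Taking the inner product against $\ric$ yields
$$\langle \ric, H^2\rangle = \frac{|H|^2}{3}\langle \ric, P\rangle = \frac{|H|^2}{3}\bigl(R - \ric(X_H)\bigr),$$
where $\ric(X_H)$ denotes $\ric(\hat X_H,\hat X_H)$. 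This converts the first claimed equality $R_H = \tfrac13 R|H|^2 - 2\langle\ric, H^2\rangle$ into the second form $-\tfrac13 R|H|^2 + \tfrac23\ric(X_H)|H|^2$.

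To establish the first equality, I would contract directly in the adapted frame. Since $H^{ijr}$ is supported on indices in $\{1,2,3\}$ where it equals $\lambda$ times the 3D Levi-Civita symbol $\epsilon_{ijr}$, the classical identity $\sum_r \epsilon_{ijr}\epsilon_{klr} = \delta_{ik}\delta_{jl} - \delta_{il}\delta_{jk}$ yields $H^{ijr}H^{kl}{}_r = \lambda^2(P_{ik}P_{jl} - P_{il}P_{jk})$. Substituting into $R_H = R_{ijkl}H^{ijr}H^{kl}{}_r$ and using the antisymmetry of the Riemann tensor in its last pair reduces the sum to $2\lambda^2 R_{ijkl}P^{ik}P^{jl}$. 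Expanding $P = g - \hat X_H^\flat\otimes\hat X_H^\flat$ produces four terms: the full scalar-curvature trace $R$, two Ricci-like contractions yielding $\pm\ric(X_H)$, and a $\hat X_H\hat X_H\hat X_H\hat X_H$ term that vanishes by the antisymmetry of $R$ in its first pair. After using $\lambda^2 = |H|^2/6$, the coefficients assemble into the stated first identity.

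For the evolution step, the diagonal form of $H^2$ gives $|H^2|^2 = 3\bigl(\tfrac{|H|^2}{3}\bigr)^2 = \tfrac13 |H|^4$, so $-\tfrac32|H^2|^2 = -\tfrac12|H|^4$, and the identity for $R_H$ gives $-6R_H = 2R|H|^2 - 4\ric(X_H)|H|^2$; substituting into (\ref{evolveH}) produces the claimed evolution equation. To close, apply the parabolic maximum principle to $u := |H|^2$ on $M\times[0,T]$: the hypotheses $\ric \geqs -K$ and $R \leqs K$ give $2R|H|^2 - 4\ric(X_H)|H|^2 \leqs 6K|H|^2$, so after discarding $-2|\nabla H|^2$ we have $\square u \leqs 6Ku - \tfrac12 u^2$; at an interior spacetime maximizer $(x_0,t_0)$, $\square u(x_0,t_0)\geqs 0$ forces $u(x_0,t_0)\leqs 12K$, while if the maximum lies on the initial slice we inherit the initial bound. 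The main obstacle is the clean algebraic reduction of $R_H$ in the second step: one must carefully track the pair symmetry $R_{ijkl} = R_{klij}$ and the antisymmetries against the projector $P$ so that the cross Ricci-like traces collapse correctly; the dimension-four hypothesis enters essentially through the availability of the 3D Levi-Civita identity on the support of $H$.
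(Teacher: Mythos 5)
Your proof is correct in substance and reaches the same identities, but your route to the key formula for $R_H$ is genuinely slicker than the paper's. The paper contracts $R_{ijkl}H^{ijr}H^{klr}$ by brute force in the adapted frame, splitting the sum into the ``diagonal'' part $\{i,j\}=\{k,l\}$ (which it rewrites via sectional curvatures $K_{ij}$ and the relation $K_{12}+K_{13}+K_{23}=\tfrac12R-R_{44}$, etc.) and the ``off-diagonal'' part $\{i,j\}\neq\{k,l\}$ (which it shows cancels term by term using the first Bianchi identity). Your approach instead observes that since $H=\lambda\,e^1\wedge e^2\wedge e^3$, the quantity $H^{ijr}H^{kl}{}_r$ collapses via the three-dimensional Levi-Civita identity to $\lambda^2(P_{ik}P_{jl}-P_{il}P_{jk})$ with $P=g-\hat X_H^\flat\otimes\hat X_H^\flat$, after which $R_H=2\lambda^2 R_{ijkl}P^{ik}P^{jl}$ is a one-line contraction. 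This avoids the paper's case-by-case index bookkeeping entirely and makes the structural reason the off-diagonal terms vanish transparent. Both methods crucially rely on Proposition~\ref{H2coor} and the restriction to $n=4$, so the scope is the same; yours is simply cleaner.

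One caution on signs: the paper's convention has $\ric_{ir}=-R_{sisr}$, so in the expansion $R_{ijkl}P^{ik}P^{jl}=R_{ijkl}g^{ik}g^{jl}-2R_{ijkl}g^{ik}X^jX^l+R_{ijkl}X^iX^jX^kX^l$ the three surviving contributions evaluate to $-R+2\ric(X_H)+0$, not ``$R$ plus $\pm\ric(X_H)$'' as your sketch says. With $\lambda^2=|H|^2/6$ this gives $R_H=-\tfrac13R|H|^2+\tfrac23\ric(X_H)|H|^2$, which is the stated result, so your final answer is right, but the intermediate signs as written in your prose would give the wrong sign for the Ricci term. Worth tracking carefully if this is written out. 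The projector computation of $\langle\ric,H^2\rangle$, the reduction $|H^2|^2=\tfrac13|H|^4$, the substitution into the evolution equation, and the maximum-principle closure are all correct and match the paper's argument.
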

	\begin{proof}
		Take the normal coordinate and let $\ric=(R_{ij})$ to be diagonal.
		\begin{equation*}
			R_H=\sum_{i,j,k,l,r}R_{ijkl}H_{ijr}H_{klr}=\sum_{\{i,j\}=\{k,l\}}+\sum_{\{i,j\}\neq\{k,l\}},
		\end{equation*}
		\begin{equation*}
			\begin{aligned}
				\sum_{\{i,j\}=\{k,l\}}=-4\left[K_{12}(H^2_{123}+H^2_{124})+K_{13}(H^2_{123}+H^2_{134})+K_{14}(H^2_{124}+H^2_{134})\right.\\
				\left.+K_{23}(H^2_{123}+H^2_{234})+K_{24}(H^2_{124}+H^2_{234})+K_{34}(H^2_{134}+H^2_{234})\right],
			\end{aligned}
		\end{equation*}
		where $K_{ij}=R_{ijji}$ is the sectional curvature.
		Note that
		\begin{equation*}
			(K_{12}+K_{13}+K_{23})H^2_{123}=\left(\frac12R-R_{44}\right)H^2_{123}.
		\end{equation*}
		Thus we have
		\begin{equation*}
			\sum_{\{i,j\}=\{k,l\}}=-2R\left(H^2_{123}+H^2_{124}+H^2_{134}+H^2_{234}\right)+4\left(R_{11}H^2_{234}+R_{22}H^2_{134}+R_{33}H^2_{124}+R_{44}H^2_{123}\right).
		\end{equation*}
		Using
		\begin{equation*}
			(H^2)_{11}=2(H^2_{123}+H^2_{124}+H^2_{134})=2\left(\frac16|H|^2-H^2_{234}\right),
		\end{equation*}
		we have
		\begin{equation}\label{Rh4ij=kl}
			\begin{aligned}
				\sum_{\{i,j\}=\{k,l\}}&=-\frac13R|H|^2+\frac23|H|^2\sum_i R_{ii}-2\sum_i R_{ii}(H^2)_{ii}\\
				&=\frac13 R|H|^2-2\left<\ric,H^2\right>.
			\end{aligned}
		\end{equation}
		\begin{equation}\label{Rh4ijn=kl}
			\begin{aligned}
				\sum_{\{i,j\}\neq\{k,l\}}=8&\left[(R_{1213}+R_{2434})H_{124}H_{134}+(R_{1214}+R_{2334})H_{123}H_{143}+(R_{1223}+R_{1434})H_{124}H_{234}\right.\\
				&\left.(R_{1224}+R_{1334})H_{123}H_{243}+(R_{1314}+R_{2324})H_{132}H_{142}+(R_{1323}+R_{1424})H_{134}H_{234}\right]\\
				=8&\left(-R_{23}H_{124}H_{134}-R_{24}H_{123}H_{143}+R_{13}H_{124}H_{234}+R_{14}H_{123}H_{243}\right.\\
				&\ \ \left.-R_{34}H_{132}H_{142}-R_{12}H_{134}H_{234}\right)\\
				=0&.
			\end{aligned}
		\end{equation}
		Combine (\ref{Rh4ij=kl}) and (\ref{Rh4ijn=kl}), we have
		\begin{equation*}
			R_H=\frac13R|H|^2-2\left<\ric,H^2\right>.
		\end{equation*}
		For the suitable orthonormal basis in \Cref{H2coor}, $X_H=e_4$.
		\begin{equation*}
			\left<\ric,H^2\right>=\sum_i R_{ii}(H^2)_{ii}=\frac13|H|^2\left(R_{11}+R_{22}+R_{33}\right)=\frac13R|H|^2-\frac13R_{44}|H|^2,
		\end{equation*}
		\begin{equation*}
			R_H=-\frac13R|H|^2+\frac23\ric(X_H)|H|^2.
		\end{equation*}
	\end{proof}
	Recently, an interesting bound of torsion is found in the case of the pluriclosed flow on complex surface with special initial data, see \cite{ye}.
	\begin{theorem}[\text{\cite[Theorem 1.3]{ye}}]
		For a compact complex surface $(M^4,J,g(t))$ that admits a pluriclosed flow with Hermitian-symplectic initial data on $[0,\tau)$, if the Chern scalar curvature satisfies
		\begin{align*}
			R^C(x,t)\leqslant K,\  x\in M, t\in [0,\tau),
		\end{align*} 
		then we have
		\begin{align*}
			|H(x,t)|^2_{g(x,t)}\leqslant\max\left\{2K,\max_x|H(x,0)|^2_{g(x,0)} \right\},\ x\in M, t\in [0,\tau).
		\end{align*}
	\end{theorem}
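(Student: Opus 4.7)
The plan is to adapt the maximum principle argument used in the preceding $4$-dimensional corollary, replacing the bounds on Riemannian scalar and Ricci curvature by a bound on the Chern scalar curvature $R^C$ by exploiting the extra rigidity of the Hermitian-symplectic structure.

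First I would recall the evolution equation for $|H|^2$ derived earlier in the $n=4$ case,
\begin{equation*}
\square |H|^2 = -2|\nabla H|^2 - \tfrac{1}{2}|H|^4 + 2R|H|^2 - 4\operatorname{Ric}(X_H,X_H)|H|^2,
\end{equation*}
where $X_H = (\star H)^\sharp$ is the distinguished zero-eigendirection of $H^2$ isolated in \Cref{H2coor}. Dropping the nonpositive gradient term, the task reduces to controlling the combination $2R - 4\operatorname{Ric}(X_H,X_H)$ by $R^C$, modulo lower-order terms that can be absorbed into the drift $-\tfrac{1}{2}|H|^4$.

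Second, I would use the Hermitian-symplectic hypothesis. On a complex surface this provides a real closed $2$-form $\Omega$ with $\Omega^{1,1}=\omega$; by work of Streets the pluriclosed flow preserves this property, so $\Omega$ remains closed along the flow. Writing $\Omega = \omega + \beta + \bar\beta$ with $\beta$ of type $(2,0)$, the identity $d\Omega=0$ gives algebraic and first-order relations coupling the torsion $H = d^c\omega$, the Lee form $\theta = -Jd^*\omega$, and $\beta$. In real dimension $4$ one has $|H|^2 = 2|\theta|^2$, and $X_H$ coincides (up to sign and normalization) with the metric dual of $\theta$, so the zero-eigendirection in $H^2$ is canonically identified with the Lee direction.

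Third, using these Hermitian-symplectic relations together with the standard formulae relating Riemannian, Bismut and Chern curvatures on a Hermitian surface, I would establish a pointwise identity of the schematic form
\begin{equation*}
2R - 4\operatorname{Ric}(X_H,X_H) + \tfrac{1}{2}|H|^2 \leqs 2R^C + \operatorname{div}(\cdot),
\end{equation*}
where the right-hand side contains only a divergence of a vector field built from $\theta$ and $\beta$. Substituted into the evolution inequality this gives
\begin{equation*}
\square |H|^2 \leqs -|H|^4 + 2R^C |H|^2 + \operatorname{div}(\cdot)\cdot |H|^2.
\end{equation*}
Applying the maximum principle at a point $(x_0,t_0)$ where $|H|^2$ attains its maximum on $M\times[0,t_0]$ with $|H|^2(x_0,t_0) > \max_x |H(x,0)|^2$, the gradient $\nabla|H|^2$ vanishes so the first-order divergence term drops out, $\square|H|^2\geqs0$, and $R^C\leqs K$ yields $|H|^2(x_0,t_0) \leqs 2K$. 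The main obstacle is step three: the sharp coefficient $2$ in the final bound (as opposed to the crude $12$ in the purely Riemannian $n=4$ corollary) indicates that the Hermitian-symplectic structure must be used with full precision, converting every would-be Cauchy-Schwarz estimate into an exact algebraic cancellation in a unitary frame adapted to $\Omega$.
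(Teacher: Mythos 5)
The statement you are trying to prove is not proved in this paper at all: it is quoted verbatim as \cite[Theorem 1.3]{ye} at the end of Section~5, purely as a pointer to a recent result in the literature.  There is therefore no internal proof to compare your proposal against, and your review should be read as an independent assessment of the proposed argument.

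On its own terms the proposal is an outline rather than a proof, and the gap is concentrated exactly where you flag it.  The third step -- the claimed pointwise relation
\begin{equation*}
2R - 4\ric(X_H,X_H) + \tfrac{1}{2}|H|^2 \leqs 2R^C + \operatorname{div}(\cdot)
\end{equation*}
-- is the entire content of the theorem, and it is asserted schematically with no derivation of the divergence term or the coefficients.  Since the advertised conclusion has the sharp constant $2$, every normalization must be exact: you cannot hide a Cauchy--Schwarz loss in ``$\operatorname{div}(\cdot)$.''  In the same vein, the assertions $|H|^2 = 2|\theta|^2$ and ``$X_H$ is the Lee direction'' are stated without proof; both are plausible up to constants on a Hermitian surface, but any normalization error propagates directly into the final bound.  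A further, structural problem is that the evolution equation
\begin{equation*}
\square |H|^2 = -2|\nabla H|^2 - \tfrac{1}{2}|H|^4 + 2R|H|^2 - 4\ric(X_H,X_H)\,|H|^2
\end{equation*}
you start from is derived earlier in the paper for the \emph{generalized Ricci flow}, whereas the statement concerns the \emph{pluriclosed flow}.  As recalled in the introduction (following \cite{plurireg}), these differ by a one-parameter family of diffeomorphisms generated by the Lee vector field, so the correct evolution of $|H|^2$ along the pluriclosed flow carries an extra Lie-derivative term $\mathcal{L}_{\theta^\sharp}|H|^2$.  That term is first order and does vanish at an interior spatial maximum, so the maximum-principle strategy survives, but the discrepancy has to be made explicit and the drift terms tracked; you cannot simply transplant the GRF evolution inequality into the pluriclosed setting and expect the coefficients to be those of the theorem.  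Finally, the Hermitian-symplectic hypothesis is only invoked in words: you would need to show concretely how the closedness of $\Omega = \omega + \beta + \bar\beta$ and the relation $\partial\omega = -\bar\partial\beta$ produce the exact algebraic cancellations converting the Riemannian curvature combination into $2R^C$ plus a pure divergence.  Until that computation is carried out, the proposal does not establish the theorem.
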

	
	\bibliographystyle{ytamsalpha}
	\bibliography{ref}
	
\end{document}